\newtheorem{theorem}{Theorem}[section]
\newtheorem{Prop}[theorem]{Proposition}
\newtheorem{Thm}[theorem]{Theorem}
\newtheorem{Lem}[theorem]{Lemma}
\newtheorem{Cor}[theorem]{Corollary}
\newtheorem{Conj}[theorem]{Conjecture}
\theoremstyle{definition}
\newtheorem{Def}[theorem]{Definition}
\newtheorem{Ex}[theorem]{Example}
\theoremstyle{remark}
\numberwithin{equation}{section}
\newcommand{\R}{{\mathbb R}}
\newcommand{\C}{{\mathbb C}}
\newcommand{\qu}{{\mathbb H}}
\newcommand{\cay}{{\mathbb O}}
\newcommand{\Z}{{\mathbb Z}}
\newcommand{\Aut}{\mbox{\rm Aut}}
\newcommand{\id}{\mbox{\rm id}}
\begin{document}

\title{Two-point homogeneous quandles with prime cardinality}
\author{Hiroshi Tamaru}
\address{Department of Mathematics, Hiroshima University, 
Higashi-Hiroshima 739-8526, Japan}
\email{tamaru@math.sci.hiroshima-u.ac.jp}
\keywords{quandles, symmetric spaces, two-point homogeneous Riemannian manifolds}
\thanks{2010 \textit{Mathematics Subject Classification}. 
Primary~57M25; Secondary~53C30}
\thanks{A part of this research was supported by KAKENHI (20740040, 24654012).}

%57M25 Knots and links in $S^3$ 
%57Q45 Knots and links (in high dimensions)
%57M27 Invariants of knots and 3-manifolds

\begin{abstract}
Quandles can be regarded as generalizations of symmetric spaces. 
Among symmetric spaces, 
two-point homogeneous Riemannian manifolds would be the most fundamental ones. 
In this paper, we define two-point homogeneous quandles analogously, 
and classify those with prime cardinality. 
\end{abstract}

\maketitle

\section{Introduction}

A \textit{quandle} is a set $X$ endowed with a binary operator 
$\ast : X \times X \to X$ 
satisfying three axioms, 
derived from the Reidemeister moves of a classical knot. 
Quandles were introduced by Joyce (\cite{Joyce}), 
and have played very important roles, in particular, in knot theory. 
For quandles and their applications, 
we refer to a survey \cite{Carter} and the references therein. 

Quandles are also related to symmetric spaces. 
Recall that a \textit{Riemannian symmetric space} is 
a connected Riemannian manifold $M$ equipped with a symmetry 
$s_x : M \to M$ for each $x \in M$. 
For symmetric spaces, 
we refer to famous textbooks \cite{Helgason, Loos}. 
It was mentioned in \cite{Joyce} 
that every symmetric space is a quandle, 
by defining the binary operator $y \ast x := s_x(y)$. 
From this correspondence, 
quandles can be regarded as 
``discrete symmetric spaces''. 
Our theme is to study quandles from this point of view. 
The purpose of this paper is to give the first step 
toward the theory of discrete symmetric spaces. 

In this paper, 
we define the notion of two-point homogeneous quandles, and study them. 
This notion is a natural analogue of the notion of 
two-point homogeneous Riemannian manifolds, 
which form a very fundamental subclass of symmetric spaces. 
In fact, a Riemannian manifold is two-point homogeneous 
if and only if it is isometric to the Euclidean space 
or a Riemannian symmetric space of rank one 
(see Subsect.\ \ref{subsection:riemannian}). 
Hence, 
in the theory of quandles and discrete symmetric spaces, 
two-point homogeneous quandles would play fundamental roles, 
similar to rank one symmetric spaces in the theory of symmetric spaces. 

The main result of this paper is 
an explicit classification of two-point homogeneous quandles with prime cardinality. 
We prove that, for each prime number $p \geq 3$, 
there exists a one-to-one correspondence between 
isomorphism classes of two-point homogeneous quandles with cardinality $p$ 
and primitive roots modulo $p$. 
As a consequence, 
although the condition of two-point homogeneity seems to be very strong, 
two-point homogeneous quandles with cardinality $p$ do exist 
for every prime number $p \geq 3$. 

This paper is organized as follows. 
In Sect.\ \ref{section:preliminaries}, 
we briefly recall some necessary background 
on two-point homogeneous Riemannian manifolds and quandles. 
In Sect.\ \ref{section:THQ}, 
we define two-point homogeneous quandles and study their properties. 
We also define the notion of quandles of cyclic type, 
which gives a sufficient condition for quandles to be two-point homogeneous. 
A classification of two-point homogeneous quandles with prime cardinality, 
mentioned above, 
is given in Sect.\ \ref{section:prime-cardinality}. 
On the way to our classification, 
we recall linear Alexander quandles $\Lambda_p / (t-a)$ with prime cardinality $p$, 
and determine their inner automorphism groups. 
As an appendix, in Sect.\ \ref{section:appendix}, 
we study whether or not the notion of quandles of cyclic type 
is a necessary condition for the two-point homogeneity. 

The author is deeply grateful to 
Seiichi Kamada for valuable comments and suggestions, 
and to the members of Hiroshima University Topology-Geometry Seminar, 
where he learned many things about knots and quandles. 
The author is also grateful to 
Nobuyoshi Takahashi for kind advice, 
and indebted to the referee for improvements in the exposition. 

\section{Preliminaries} 

\label{section:preliminaries}

In this section we briefly recall some necessary background 
on two-point homogeneous Riemannian manifolds and quandles. 

\subsection{Riemannian homogeneous geometry}

\label{subsection:riemannian}

In this subsection we briefly recall some fundamental facts on 
Riemannian symmetric spaces and two-point homogeneous Riemannian manifolds. 
We refer to \cite{Helgason} for the facts mentioned in this subsection. 

\begin{Def}
A connected Riemannian manifold $(M,g)$ is called a 
\textit{Riemannian symmetric space} 
if, for every $x \in M$, 
there exists an isometry $s_x : M \to M$, 
called the \textit{symmetry} at $x$, such that 
\begin{enumerate}
\item[(i)]
$x$ is an isolated fixed point of $s_x$, 
\item[(ii)]
$s_x^2 = \id_M$ (the identity map). 
\end{enumerate}
\end{Def}

We here recall one basic property, 
which will give a connection from symmetric spaces to quandles. 
Note that the following property is sometimes employed 
as one of axioms to define symmetric spaces. 
See \cite{Loos, Nagano, NaganoTanaka}. 

\begin{Prop}
\label{prop:symmetric-property}
Let $(M,g)$ be a Riemannian symmetric space. 
Then, for every $x,y \in M$, we have $s_x \circ s_y = s_{s_x(y)} \circ s_x$. 
\end{Prop}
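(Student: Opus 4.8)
The plan is to verify the identity by comparing the two isometries $s_x \circ s_y$ and $s_{s_x(y)} \circ s_x$ directly, using the defining property that each symmetry $s_z$ is the \emph{unique} isometry having $z$ as an isolated fixed point with differential $-\id$ on $T_z M$. The essential observation is that conjugating a symmetry by an isometry produces another symmetry, located at the image point. More precisely, first I would prove the general lemma: if $\varphi : M \to M$ is any isometry of a Riemannian symmetric space and $z \in M$, then $\varphi \circ s_z \circ \varphi^{-1} = s_{\varphi(z)}$. This follows because the left-hand side is an isometry, it fixes $\varphi(z)$, that fixed point is isolated (since $z$ is isolated for $s_z$ and $\varphi$ is a diffeomorphism), and its square is $\varphi \circ s_z^2 \circ \varphi^{-1} = \id_M$; by the uniqueness of symmetries at a point it must coincide with $s_{\varphi(z)}$.

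With this lemma in hand, I would apply it with $\varphi = s_x$ and $z = y$, which immediately gives
\[
s_x \circ s_y \circ s_x^{-1} = s_{s_x(y)}.
\]
Since $s_x^2 = \id_M$ we have $s_x^{-1} = s_x$, so multiplying both sides on the right by $s_x$ yields $s_x \circ s_y = s_{s_x(y)} \circ s_x$, which is exactly the claimed formula.

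The one point that needs care — and which I regard as the main obstacle — is justifying the uniqueness statement: that an isometry of a connected Riemannian manifold is determined by its value and differential at a single point. This is the standard rigidity theorem for isometries, valid because an isometry commutes with the exponential map, so its behavior near a fixed point propagates along geodesics and, by connectedness, to all of $M$. In particular, an isometry fixing $z$ with differential $-\id$ at $z$ must be $s_z$. I would cite this from \cite{Helgason} rather than reprove it. Everything else in the argument is formal: closure of the isometry group under composition and inversion, and the fact that diffeomorphisms carry isolated fixed points to isolated fixed points.
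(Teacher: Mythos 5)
Your argument is correct and is the standard one: the paper itself gives no proof of this proposition (it is recalled as a known fact, with references to \cite{Helgason, Loos}), so there is nothing to diverge from, and your conjugation lemma $\varphi \circ s_z \circ \varphi^{-1} = s_{\varphi(z)}$ combined with $s_x^{-1} = s_x$ is exactly how the cited sources establish it. The only point worth tightening is the identification of the conjugate with $s_{\varphi(z)}$: rather than re-deriving that an involutive isometry with an isolated fixed point has differential $-\id$ there, you can get it in one line from the chain rule, $d(\varphi \circ s_z \circ \varphi^{-1})_{\varphi(z)} = d\varphi_z \circ (-\id) \circ (d\varphi_z)^{-1} = -\id$, and then invoke the rigidity theorem (an isometry of a connected manifold is determined by its value and differential at one point) exactly as you do; with that, your proof is complete.
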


Next we recall the definition of two-point homogeneous Riemannian manifolds. 
For a Riemannian manifold $(M,g)$, 
we denote by $\mathrm{Isom}(M,g)$ the isometry group, 
and by $d$ the distance function defined by $g$. 

\begin{Def}
A connected Riemannian manifold $(M,g)$ is said to be 
\textit{two-point homogeneous} 
if, for every 
$(x_1, x_2), (y_1, y_2) \in M \times M$ 
satisfying $d(x_1, x_2) = d(y_1, y_2)$, 
there exists $f \in \mathrm{Isom}(M,g)$ 
such that $f(x_1 , x_2) = (y_1 , y_2)$. 
\end{Def}

Note that $f(x_1 , x_2) = (y_1 , y_2)$ 
means that $f(x_1) = y_1$ and $f(x_2) = y_2$. 
We use this notation for simplicity. 

Now we recall a nice characterization and a classification of 
two-point homogeneous Riemannian manifolds. 
We refer to \cite[Ch.\ IX, \S 5]{Helgason} and the references therein. 
Let $\mathrm{Isom}(M,g)_x$ be the isotropy subgroup 
of $\mathrm{Isom}(M,g)$ at $x$, 
which naturally acts on the tangent space $T_x M$. 

\begin{Thm}
\label{thm:helgason}
For a connected Riemannian manifold $(M,g)$, 
the following conditions are mutually equivalent$:$ 
\begin{enumerate}
\item
$(M,g)$ is two-point homogeneous, 
\item
$(M,g)$ is isotropic, 
that is, 
for every $x \in M$, 
the action of $\mathrm{Isom}(M,g)_x$ on the unit sphere in $T_x M$ is transitive, 
\item
$(M,g)$ is isometric to the Euclidean space $\R^n$ 
or a Riemannian symmetric space of rank one. 
\end{enumerate}
\end{Thm}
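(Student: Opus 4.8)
The plan is to prove the elementary equivalence $(1)\Leftrightarrow(2)$ directly, and then to obtain $(2)\Leftrightarrow(3)$ as the substantive classification, whose essential ingredients are that an isotropic manifold is complete, homogeneous and locally symmetric, together with the de Rham decomposition and a description of the isotropy orbits on unit spheres.

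For $(1)\Rightarrow(2)$, fix $x\in M$ and unit vectors $u,v\in T_xM$. For $t>0$ below the injectivity radius at $x$, the geodesics $\gamma_u,\gamma_v$ issuing from $x$ with velocities $u,v$ satisfy $d(x,\gamma_u(t))=d(x,\gamma_v(t))=t$, so by two-point homogeneity there is $f\in\mathrm{Isom}(M,g)$ with $f(x)=x$ and $f(\gamma_u(t))=\gamma_v(t)$; since $f$ carries the unique minimizing geodesic from $x$ to $\gamma_u(t)$ onto the one from $x$ to $\gamma_v(t)$, we get $(df)_x u=v$, so $\mathrm{Isom}(M,g)_x$ is transitive on the unit sphere of $T_xM$. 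For $(2)\Rightarrow(1)$, observe first that transitivity on the unit sphere forces the linear isotropy at each point to act irreducibly on the tangent space, so the $\mathrm{Isom}(M,g)$-orbit of any point is open and hence (by connectedness) all of $M$; thus $M$ is a homogeneous Riemannian manifold, and therefore complete. Given $(x_1,x_2),(y_1,y_2)\in M\times M$ with $d(x_1,x_2)=d(y_1,y_2)=r$, use homogeneity to reduce to $x_1=y_1$, pick minimizing geodesics from $x_1$ to $x_2$ and to $y_2$ with unit initial velocities $u,v\in T_{x_1}M$, choose $f\in\mathrm{Isom}(M,g)_{x_1}$ with $(df)_{x_1}u=v$, and note that $f$ carries the first geodesic onto the second; as both have length $r$, $f(x_2)=y_2$.

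For $(2)\Rightarrow(3)$ --- the heart of the matter --- we already know $M$ is complete and homogeneous with linear isotropy acting irreducibly at each point. The next step is to show $M$ is locally symmetric, i.e. $\nabla R=0$: the tensor $(\nabla R)_x$ is invariant under the linear isotropy group, a closed subgroup of $O(T_xM)$ transitive on the unit sphere, so it lies on the classical list of such subgroups (Montgomery--Samelson, Borel), and one checks case by case that the space of invariant tensors with the symmetry type of $\nabla R$ vanishes; hence $\nabla R=0$. (Alternatively one produces an isometry fixing $x$ whose differential is $-\id$ and recognizes it as the geodesic symmetry.) Being complete and locally symmetric, $M$ has a universal cover splitting as a de Rham product; but irreducibility of the linear isotropy forces this cover to be either $\R^n$ or a single irreducible symmetric space, with no Euclidean factor and no further splitting. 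In the flat case $M$ is a complete flat manifold whose isotropy is transitive on every unit sphere, which forces $M=\R^n$ when $n\ge 2$ (and for $n=1$ the remaining possibility $S^1$ is itself a rank-one symmetric space); in the irreducible case two-point homogeneity --- equivalently transitivity of the isotropy on the unit sphere --- forces rank one, since for an irreducible symmetric space of rank $\ge 2$ the isotropy representation, though irreducible, fails to be transitive on the unit sphere: its orbits are governed by the restricted-root (Weyl chamber) structure and have positive codimension.

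Finally $(3)\Rightarrow(1)$: for $\R^n$ the Euclidean group realizes any distance-preserving matching of pairs of points, and for a rank-one symmetric space the rank-one condition is precisely transitivity of the isotropy representation on the unit sphere, i.e. $(2)$, which we have shown implies $(1)$. The main obstacle is the implication $(2)\Rightarrow(3)$: establishing $\nabla R=0$ essentially forces an appeal to the classification of groups acting transitively on spheres, and the final reduction to rank one requires controlling isotropy orbits on unit spheres in higher rank --- these are exactly the points for which we defer to \cite{Helgason} and the references therein.
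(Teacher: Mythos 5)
The paper does not prove this statement at all: Theorem \ref{thm:helgason} is quoted classical background, with the proof deferred entirely to \cite[Ch.~IX, \S 5]{Helgason} and the references therein (Wang, Tits), so there is no internal argument to compare yours against. Your sketch follows the standard route in that literature --- $(1)\Leftrightarrow(2)$ by elementary geodesic arguments, then isotropic $\Rightarrow$ homogeneous $\Rightarrow$ locally symmetric via the classification of connected groups acting transitively on spheres, then de Rham plus the fact that the isotropy representation of an irreducible symmetric space of rank $\geq 2$ has orbits on the unit sphere of codimension $\mathrm{rank}-1>0$ --- and you are right that the genuinely hard ingredients (transitive sphere groups, $\nabla R=0$, the higher-rank orbit structure) are exactly what must be cited; the paper cites the whole theorem instead. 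Two points where your outline is looser than it should be. First, in $(2)\Rightarrow(1)$ the inference ``irreducibility of the linear isotropy forces the orbit of any point to be open'' is not valid as stated: the tangent space to an orbit is indeed an invariant subspace, but it could a priori be the zero subspace, so you only get that each orbit is open or discrete, and the discrete case needs to be excluded. The standard clean fix avoids irreducibility altogether: for $y,z$ in a common convex ball, take the midpoint $m$ of the minimizing geodesic and an isotropy element at $m$ sending the velocity $\gamma'$ to $-\gamma'$; it swaps $y$ and $z$, so orbits are open and homogeneity follows. Second, your $(2)\Rightarrow(3)$ argument produces the classification of the universal cover, whereas the conclusion concerns $M$ itself; one still has to rule out nontrivial isotropic quotients other than the projective spaces (e.g.\ lens spaces fail transitivity of the isotropy on the unit sphere). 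Both points are handled in the sources you cite, so as a sketch accompanying a quoted theorem your proposal is acceptable, but those two steps should not be presented as immediate.
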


The action of $\mathrm{Isom}(M,g)_x$ on $T_x M$ is called the 
\textit{isotropy representation} at $x$. 
It is important that the two-point homogeneity can be characterised 
in terms of the isotropy representations. 
We will have an analogous characterization for two-point homogeneous quandles. 

Recall that a Riemannian symmetric space of rank one is either the sphere $S^n$, 
the projective spaces 
$\R \mathrm{P}^n$, $\C \mathrm{P}^n$, $\qu \mathrm{P}^n$, $\cay \mathrm{P}^2$, 
or the hyperbolic spaces 
$\R \mathrm{H}^n$, $\C \mathrm{H}^n$, $\qu \mathrm{H}^n$, $\cay \mathrm{H}^2$. 
Therefore, two-point homogeneous Riemannian manifolds 
form a very fundamental subclass of the class of Riemannian symmetric spaces. 

\subsection{Quandles} 

In this subsection, 
we recall some fundamental notions and examples of quandles. 
We employ a formulation similar to symmetric spaces, 
but start from the usual definition. 

\begin{Def}
\label{def:quandle}
Let $X$ be a set and $\ast : X \times X \to X$ be a binary operator. 
The pair $(X , \ast)$ is called a 
\textit{quandle}
if 
\begin{enumerate}
\item[(Q1)]
$\forall x \in X$, $x \ast x = x$, 
\item[(Q2)]
$\forall x , y \in X$, 
$\exists! z \in X$ : $z \ast y = x$, and 
\item[(Q3)]
$\forall x , y , z \in X$, 
$(x \ast y) \ast z = (x \ast z) \ast (y \ast z)$. 
\end{enumerate}
\end{Def}

If $(X , \ast)$ is a quandle, 
then $\ast$ is called a \textit{quandle structure} on $X$. 
We restate this definition in a similar way to symmetric spaces. 

\begin{Prop}
\label{prop:s-quandle-structure}
Let $X$ be a set, and 
assume that there exists a map $s_x : X \to X$ for every $x \in X$. 
Then, the binary operator $\ast$ defined by 
$y \ast x := s_x(y)$ 
is a quandle structure on $X$ if and only if 
\begin{enumerate}
\item[(S1)]
$\forall x \in X$, $s_x(x) = x$, 
\item[(S2)]
$\forall x \in X$, $s_x$ is bijective, and 
\item[(S3)]
$\forall x , y \in X$, $s_x \circ s_y = s_{s_x(y)} \circ s_x$. 
\end{enumerate}
\end{Prop}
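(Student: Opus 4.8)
The plan is to prove the equivalence by a direct translation of each quandle axiom (Q1)--(Q3) into a statement about the family $\{s_x\}_{x\in X}$, using the defining relation $y\ast x := s_x(y)$. First I would observe that the map $\ast$ and the family $\{s_x\}$ determine each other: fixing $x$, the operation $y\mapsto y\ast x$ is exactly the map $s_x$. So the content is purely a matter of rewriting. The proof will run in two directions, but since each axiom corresponds to exactly one of (S1)--(S3), it is cleanest to show, for each $i\in\{1,2,3\}$, that (Qi) holds if and only if (Si) holds, under the standing correspondence.

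For (Q1) $\Leftrightarrow$ (S1): the equation $x\ast x = x$ reads $s_x(x)=x$, so these are literally the same statement for every $x$. For (Q2) $\Leftrightarrow$ (S2): axiom (Q2) says that for all $x,y$ there is a unique $z$ with $z\ast y = x$, i.e. $s_y(z)=x$; this says precisely that $s_y$ is a bijection (surjectivity gives existence of $z$, injectivity gives uniqueness), so (Q2) for all $x,y$ is equivalent to (S2) for all $y$. For (Q3) $\Leftrightarrow$ (S3): I would rewrite $(x\ast y)\ast z = (x\ast z)\ast(y\ast z)$ as $s_z(s_y(x)) = s_{s_z(y)}(s_z(x))$, i.e. $(s_z\circ s_y)(x) = (s_{s_z(y)}\circ s_z)(x)$. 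Since this must hold for all $x$, it is equivalent to the operator identity $s_z\circ s_y = s_{s_z(y)}\circ s_z$ for all $y,z$, which is (S3) after renaming $z\mapsto x$. Here one should note that (S3) is stated in terms of maps, so the quantifier ``$\forall x$'' in (Q3) is exactly what upgrades the pointwise equality to the map equality; this is the one place where a small remark is warranted rather than a purely mechanical substitution.

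There is essentially no obstacle: the statement is a reformulation, and the only subtlety is being careful about the order of composition and about which variable in (Q3) plays the role of the ``base point'' in (S3). I would double-check the bookkeeping by writing out $(x\ast y)\ast z$ from the inside out ($x\ast y = s_y(x)$, then apply $s_z$) and likewise $(x\ast z)\ast(y\ast z) = s_{y\ast z}(x\ast z) = s_{s_z(y)}(s_z(x))$, confirming the match with (S3). Once the three equivalences are in place, the proposition follows immediately by combining them.

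Finally, I would remark that this proposition is the bridge promised in the text between the combinatorial definition of a quandle and the symmetric-space-style axioms (S1)--(S3), and that (S3) is the discrete analogue of the identity $s_x\circ s_y = s_{s_x(y)}\circ s_x$ recorded in Proposition~\ref{prop:symmetric-property}; no further input is needed for the proof itself.
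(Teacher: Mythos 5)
Your proof is correct: the axiom-by-axiom translation (Q1)$\Leftrightarrow$(S1), (Q2)$\Leftrightarrow$(S2), (Q3)$\Leftrightarrow$(S3) via $y \ast x = s_x(y)$ is exactly the intended argument, which the paper omits with the remark that it is easy from Definition \ref{def:quandle}. Your bookkeeping of the composition order in (Q3) matches (S3), so nothing more is needed.
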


The proof is easy from Definition \ref{def:quandle}. 
Note that, for quandles, $s_x^2 = \id_X$ is not required. 
Throughout this paper, we denote quandles by $X = (X,s)$ 
with the quandle structures 
\begin{align}
s : X \to \mathrm{Map} (X,X) : x \mapsto s_x . 
\end{align}
Here, $\mathrm{Map} (X,X)$ denotes the set of all maps from $X$ to $X$. 
From our formulation, one easily has the following 
(see Proposition \ref{prop:symmetric-property}). 

\begin{Prop}[\cite{Joyce}]
Every Riemannian symmetric space is a quandle. 
\end{Prop}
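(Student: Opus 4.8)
The plan is to verify directly that the symmetries of a Riemannian symmetric space satisfy the three conditions (S1)--(S3) of Proposition~\ref{prop:s-quandle-structure}, and then to invoke that proposition. Concretely, I would let $(M,g)$ be a Riemannian symmetric space with symmetry $s_x : M \to M$ attached to each $x \in M$, and define the binary operator by $y \ast x := s_x(y)$.

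Next I would check the hypotheses. Condition (S1), namely $s_x(x) = x$, is immediate from axiom (i) in the definition of a symmetric space, which asserts that $x$ is a (in fact isolated) fixed point of $s_x$. Condition (S2), that each $s_x$ is bijective, follows either from axiom (ii), $s_x^2 = \id_M$, which exhibits $s_x$ as its own inverse, or simply from the fact that every isometry of a Riemannian manifold is a diffeomorphism. The only substantive point is condition (S3), $s_x \circ s_y = s_{s_x(y)} \circ s_x$ for all $x,y \in M$; but this is exactly the content of Proposition~\ref{prop:symmetric-property}, which holds for every Riemannian symmetric space. Hence all three hypotheses of Proposition~\ref{prop:s-quandle-structure} are satisfied, so $\ast$ is a quandle structure on $M$ and $(M,\ast)$ is a quandle.

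In this organization there is essentially no obstacle, since the work has been front-loaded into Proposition~\ref{prop:symmetric-property}, whose proof is the classical observation that $s_x \circ s_y \circ s_x^{-1}$ is an involutive isometry having $s_x(y)$ as an isolated fixed point, together with the uniqueness of the symmetry at a given point. If one wanted a self-contained argument one would reprove that fact, and that verification --- in particular confirming that $s_x \circ s_y \circ s_x^{-1}$ has $s_x(y)$ as an isolated fixed point and then appealing to uniqueness of symmetries --- is the one step that requires genuine Riemannian geometric input rather than purely formal manipulation.
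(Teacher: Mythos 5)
Your proof is correct and matches the paper's intended argument: the paper proves this exactly by combining Proposition~\ref{prop:s-quandle-structure} with Proposition~\ref{prop:symmetric-property}, with (S1) and (S2) immediate from the definition of the symmetry. Nothing to add.
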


We here describe some other easy examples of quandles. 

\begin{Ex}
\label{ex:dihedral}
The following $(X,s)$ are quandles: 
\begin{enumerate}
\item
The \textit{trivial quandle}: 
$X$ is any set and $s_x := \id_X$ for every $x \in X$. 
\item
The \textit{dihedral quandle of order $n$}: 
$X = \{ 0, 1 , 2 , \ldots , n-1 \}$ and 
\begin{align*}
s_i(j) := 2i-j \quad (\mathrm{mod} \, n) . 
\end{align*}
\item
The \textit{regular tetrahedron quandle}: 
$X = \{ 1,2,3,4 \}$ 
and 
\begin{align*}
s_1 := (234) , \ \ 
s_2 := (143) , \ \ 
s_3 := (124) , \ \ 
s_4 := (132) . 
\end{align*}
\end{enumerate}
\end{Ex}

Note that $(234)$, $(143)$, and so on, denote the cyclic permutations. 
The dihedral quandle of order $n$ can be realized geometrically 
as the set of $n$-equal dividing points on the unit circle $S^1$. 
The quandle structure is nothing but the restriction 
of the canonical symmetry of $S^1$, 
that is, $s_x$ is the reflection with respect to the line 
through $x$ and the center of $S^1$. 
The regular tetrahedron quandle can be realized geometrically 
as the set of vertices of the regular tetrahedron. 

Next, we recall some fundamental notions for quandles, 
such as homomorphisms, (inner) automorphisms, and connectedness. 

\begin{Def}
Let $(X,s^X)$, $(Y,s^Y)$ be quandles. 
A map $f : X \to Y$ is called a 
\textit{homomorphism} 
if, for every $x \in X$, 
$f \circ s^X_x = s^Y_{f(x)} \circ f$ holds. 
\end{Def}

As usual, a bijective homomorphism is called an \textit{isomorphism}, 
and an isomorphism from $X$ onto $X$ itself is called 
an \textit{automorphism} of $X$. 
By definition, one can see that the automorphism group, denoted by 
\begin{align}
\Aut(X,s) := \{ f : X \to X : \mbox{automorphism} \} , 
\end{align}
is a group. 
It follows from (S2) and (S3) that $s_x \in \Aut(X,s)$ for every $x \in X$. 

\begin{Def}
The subgroup of $\Aut(X,s)$ generated by $\{ s_x \mid x \in X \}$ 
is called the \textit{inner automorphism group} of a quandle $(X,s)$, 
and denoted by $\mathrm{Inn}(X,s)$. 
\end{Def}

Note that $\mathrm{Inn}(X,s)$ and $\Aut(X,s)$ are different in general. 
For example, if $X$ is a trivial quandle, 
then $\mathrm{Inn}(X,s)$ consists of only the identity, 
but $\Aut(X,s)$ coincides with the group of all bijections. 

\begin{Def}
A quandle $(X,s)$ is said to be \textit{connected} if 
$\mathrm{Inn}(X,s)$ acts transitively on $X$. 
\end{Def}

The following gives some easy examples of connected quandles. 
The results are well-known, and the proofs are not difficult. 
Note that $\# X$ denotes the cardinality of $X$. 

\begin{Ex}
\label{ex:connected}
We have the following properties$:$ 
\begin{enumerate}
\item
the trivial quandle $(X,s)$ is not connected if $\# X > 1$, 
\item
the dihedral quandle $(X,s)$ is connected if and only if $\# X$ is odd, 
\item
the regular tetrahedron quandle is connected. 
\end{enumerate}
\end{Ex}

Finally in this subsection, we recall the notion of dual quandles. 

\begin{Def}
Let $(X,s)$ be a quandle. 
Then, $(X,s^{-1})$ defined by the following is called the \textit{dual quandle}: 
\begin{align}
s^{-1} : X \to \mathrm{Map}(X,X) : x \mapsto s_x^{-1} . 
\end{align}
\end{Def}

It is easy to see that the dual quandle $(X,s^{-1})$ is also a quandle. 
A quandle and its dual quandle share many properties, 
as we will see in the latter sections. 
The reason is the following, 
which can be proved directly from the definition. 

\begin{Prop}
\label{prop:dual-inn}
$\mathrm{Inn}(X,s) = \mathrm{Inn}(X,s^{-1})$. 
\end{Prop}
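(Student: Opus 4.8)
The plan is to show the two inclusions separately, or really just one, since the statement is symmetric under passing to the dual. Recall that $\mathrm{Inn}(X,s)$ is by definition the subgroup of $\Aut(X,s)$ generated by the set $\{s_x \mid x \in X\}$, while $\mathrm{Inn}(X,s^{-1})$ is the subgroup generated by $\{s_x^{-1} \mid x \in X\}$. Since any group containing an element $g$ automatically contains $g^{-1}$, the subgroup generated by $\{s_x \mid x \in X\}$ contains every $s_x^{-1}$, and hence contains the subgroup generated by $\{s_x^{-1} \mid x \in X\}$. This gives $\mathrm{Inn}(X,s^{-1}) \subseteq \mathrm{Inn}(X,s)$.

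For the reverse inclusion, one option is to run the same argument with the roles of $s$ and $s^{-1}$ interchanged, noting that $(s^{-1})^{-1} = s$, so $\mathrm{Inn}(X,s) \subseteq \mathrm{Inn}(X,s^{-1})$; combining the two inclusions yields equality. Alternatively, and perhaps more cleanly, one observes that both groups are in fact equal to the subgroup of $\Aut(X,s)$ generated by the \emph{symmetric} set $\{s_x \mid x \in X\} \cup \{s_x^{-1} \mid x \in X\}$, because a subgroup generated by a set $S$ always equals the subgroup generated by $S \cup S^{-1}$. Either route is essentially a one-line verification.

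There is one small point worth addressing explicitly rather than leaving implicit: one should confirm that the dual $(X,s^{-1})$ really is a quandle, so that $\mathrm{Inn}(X,s^{-1})$ is even defined, and that $\Aut(X,s) = \Aut(X,s^{-1})$ as subgroups of the symmetric group on $X$ — the latter because an $f$ commuting with each $s_x$ commutes with each $s_x^{-1}$, so the ambient groups in which the two generated subgroups live coincide. The excerpt already remarks that the dual of a quandle is a quandle, so this can simply be cited.

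Honestly, there is no real obstacle here; the content of the proposition is entirely formal. The only thing to be careful about is bookkeeping: making sure we are comparing subgroups sitting inside the same ambient automorphism group, and not conflating ``generated by'' with ``equal to the generating set.'' So the proof will be short — a single paragraph stating that each generating set lies in the subgroup generated by the other, hence the two subgroups coincide.
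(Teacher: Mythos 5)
Your argument is correct and is exactly the ``direct from the definition'' proof the paper has in mind: each generating set $\{s_x\}$, $\{s_x^{-1}\}$ lies in the subgroup generated by the other, so the two generated subgroups coincide. One small wording caveat: an automorphism $f$ does not \emph{commute} with each $s_x$ but satisfies $f \circ s_x = s_{f(x)} \circ f$, which equally gives $f \circ s_x^{-1} = s_{f(x)}^{-1} \circ f$, so your bookkeeping point that $\Aut(X,s) = \Aut(X,s^{-1})$ still goes through.
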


\begin{Def}
A quandle $(X,s)$ is said to be \textit{self-dual} if 
it is isomorphic to the dual quandle $(X,s^{-1})$. 
\end{Def}

\begin{Ex}
The trivial quandles, 
the dihedral quandles, 
and the regular tetrahedron quandle are self-dual. 
\end{Ex}

\section{Two-point homogeneous quandles} 

\label{section:THQ}

In this section, 
we define two-point homogeneous quandles, 
and give a characterization and a sufficient condition 
for quandles to be two-point homogeneous. 
We denote by $G := \mathrm{Inn}(X,s)$ the inner automorphism group 
of a quandle $X = (X,s)$. 

\subsection{Definition}

In this subsection, we define two-point homogeneous quandles, 
analogously to two-point homogeneous Riemannian manifolds. 

\begin{Def}
A quandle $X$ is said to be 
\textit{two-point homogeneous} 
if for all $(x_1 , x_2), (y_1 , y_2) \in X \times X$ 
satisfying $x_1 \neq x_2$ and $y_1 \neq y_2$, 
there exists $f \in G$ such that $f(x_1 , x_2) = (y_1 , y_2)$. 
\end{Def}

It would be more exact to call it 
\textit{two-point homogeneous with respect to the inner automorphism group $G$}. 
One can naturally think of a two-point homogeneous quandle 
with respect to the automorphism group $\Aut(X)$. 
These two notions are different. 
For example, a trivial quandle $X$ with $\# X > 1$ is always 
two-point homogeneous with respect to $\mathrm{Aut}(X)$, 
but not two-point homogeneous with respect to $G$. 

Since the two-point homogeneity is defined in terms of the action of $G$, 
one can immediately see the following. 

\begin{Prop}
\label{prop:TPH-quandle}
If a quandle $(X,s)$ is two-point homogeneous, 
then so is the dual quandle $(X, s^{-1})$. 
\end{Prop}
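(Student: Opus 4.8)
The plan is to observe that two-point homogeneity, as defined above, depends only on the underlying set $X$ together with the permutation group $G = \mathrm{Inn}(X,s)$ acting on it, and not on the quandle operation itself. Indeed, the defining condition says precisely that the diagonal action of $G$ on $(X\times X)\setminus\Delta$ (where $\Delta$ is the diagonal) is transitive. So the entire content of the proposition is to compare the groups acting in the two cases.

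First I would invoke Proposition \ref{prop:dual-inn}, which gives $\mathrm{Inn}(X,s) = \mathrm{Inn}(X,s^{-1})$. The point to stress is that this is an equality of subgroups of the symmetric group on $X$: the dual quandle has the same underlying set $X$, and its inner automorphism group is the subgroup of $\mathrm{Aut}(X,s^{-1})\subseteq \mathrm{Map}(X,X)$ generated by $\{ s_x^{-1} \mid x \in X\}$, which is literally the same subgroup of bijections of $X$ as the one generated by $\{ s_x \mid x \in X\}$. Hence the $G$-action on $X$ (and therefore on $X \times X$) is exactly the same object in both cases.

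Then the proof concludes immediately: suppose $(X,s)$ is two-point homogeneous, and let $(x_1,x_2),(y_1,y_2)\in X\times X$ with $x_1\neq x_2$ and $y_1\neq y_2$. By two-point homogeneity of $(X,s)$ there is $f\in \mathrm{Inn}(X,s)$ with $f(x_1,x_2)=(y_1,y_2)$; by Proposition \ref{prop:dual-inn} this same $f$ lies in $\mathrm{Inn}(X,s^{-1})$, which witnesses the two-point homogeneity of $(X,s^{-1})$.

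I do not expect any real obstacle here; the only point requiring a moment's care is making explicit that Proposition \ref{prop:dual-inn} is an identity of transformation groups on the common set $X$, rather than merely an abstract group isomorphism, so that transitivity of the action transfers verbatim.
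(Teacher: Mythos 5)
Your proposal is correct and is exactly the paper's argument: the paper's proof is a one-line appeal to Proposition \ref{prop:dual-inn}, and your write-up simply makes explicit that this is an equality of transformation groups on $X$, so the transitive action on pairs of distinct points transfers verbatim to the dual quandle.
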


\begin{proof}
This follows easily from Proposition \ref{prop:dual-inn}. 
\end{proof}

\subsection{A characterization}

Recall that two-point homogeneous Riemannian manifolds 
can be characterized in terms of the isotropy representations 
(see Theorem \ref{thm:helgason}). 
In this subsection, 
we give a similar characterization for two-point homogeneous quandles. 
Let $G_x$ be the isotropy subgroup of $G$ at $x \in X$, that is, 
\begin{align}
G_x := \{ f \in G \mid f(x)=x \} . 
\end{align}

\begin{Prop}
\label{prop:isotropic}
Let $X$ be a quandle and assume that $\# X \geq 3$. 
Then the following conditions are mutually equivalent$:$ 
\begin{enumerate}
\item
$X$ is two-point homogeneous, 
\item
for every $x \in X$, the action of $G_x$ on $X \setminus \{ x \}$ is transitive, 
\item
$X$ is connected, and there exists $x \in X$ 
such that the action of $G_x$ on $X \setminus \{ x \}$ is transitive. 
\end{enumerate}
\end{Prop}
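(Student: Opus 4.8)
The plan is to prove the cyclic chain of implications $(1) \Rightarrow (2) \Rightarrow (3) \Rightarrow (1)$. The implication $(1) \Rightarrow (2)$ is immediate: given $x \in X$ and two points $y_1, y_2 \in X \setminus \{x\}$, apply two-point homogeneity to the pairs $(x, y_1)$ and $(x, y_2)$ (both consist of distinct points since $y_i \neq x$) to obtain $f \in G$ with $f(x, y_1) = (x, y_2)$; then $f \in G_x$ and $f(y_1) = y_2$. The implication $(2) \Rightarrow (3)$ needs a small argument: connectedness says $G$ acts transitively on $X$, and I would derive this from the transitivity of each $G_x$ on $X \setminus \{x\}$ — given $x, y \in X$ with $x \neq y$ (the case $x = y$ is trivial), the group $G_x$ already moves some point to $y$, so in particular $x$ and $y$ lie in the same $G$-orbit; since $\# X \geq 3$ guarantees $X \setminus \{x\}$ is nonempty this is not vacuous. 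Once connectedness holds, the existence of one good $x$ is trivially granted by $(2)$.

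The substantive implication is $(3) \Rightarrow (1)$. Suppose $X$ is connected and fix $x_0 \in X$ with $G_{x_0}$ acting transitively on $X \setminus \{x_0\}$. I want to show that for any $x \in X$, the isotropy group $G_x$ acts transitively on $X \setminus \{x\}$. By connectedness there is $g \in G$ with $g(x_0) = x$; then $g G_{x_0} g^{-1} = G_x$, and conjugation by $g$ carries the $G_{x_0}$-action on $X \setminus \{x_0\}$ to the $G_x$-action on $X \setminus \{x\}$ as a map of $G$-sets, so transitivity transfers. Hence every $G_x$ acts transitively on its complement. Now given $(x_1, x_2)$ and $(y_1, y_2)$ with $x_1 \neq x_2$, $y_1 \neq y_2$: use connectedness to pick $h \in G$ with $h(x_1) = y_1$; then $h(x_2) \in X \setminus \{y_1\}$, and since $G_{y_1}$ acts transitively on $X \setminus \{y_1\}$ there is $k \in G_{y_1}$ with $k(h(x_2)) = y_2$; then $f := k h \in G$ satisfies $f(x_1) = y_1$ and $f(x_2) = y_2$, which is exactly two-point homogeneity.

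The main obstacle — really the only place where any care is needed — is the conjugation argument in $(3) \Rightarrow (1)$: one must check that $g G_{x_0} g^{-1} = G_x$ and that conjugation by $g$ intertwines the two permutation actions, i.e. that $g$ restricts to a $G_x$-equivariant bijection $X \setminus \{x_0\} \to X \setminus \{x\}$ relative to the isomorphism $G_{x_0} \cong G_x$. This is a standard orbit-stabilizer type fact and follows directly from $g$ being a bijection with $g(x_0) = x$, but it is the conceptual heart of why ``transitivity at one point'' propagates to ``transitivity at every point'' once connectedness is assumed. Everything else is bookkeeping, and the hypothesis $\# X \geq 3$ is used only to ensure the complements $X \setminus \{x\}$ are nonempty so that the transitivity statements are not vacuous and the pairs of distinct points exist.
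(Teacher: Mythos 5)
Your implications $(1) \Rightarrow (2)$ and $(3) \Rightarrow (1)$ are correct; the latter differs mildly from the paper (you propagate transitivity of the stabilizer to every point by conjugation and then apply connectedness once, whereas the paper funnels both pairs through a single basepoint $x$ and composes $f_2^{-1}\circ f_3\circ f_1$), and your conjugation step $gG_{x_0}g^{-1}=G_{g(x_0)}$ with the equivariance of $g$ is fine. The genuine gap is in $(2) \Rightarrow (3)$, in the derivation of connectedness. You argue: given $x \neq y$, the group $G_x$ moves some point to $y$, ``so in particular $x$ and $y$ lie in the same $G$-orbit.'' That inference is a non-sequitur: every element of $G_x$ fixes $x$, so transitivity of $G_x$ on $X \setminus \{x\}$ only shows that all points of $X \setminus \{x\}$ lie in a single $G$-orbit; it says nothing about whether $x$ itself lies in that orbit. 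Your closing remark that $\# X \geq 3$ is needed only to make the complements nonempty confirms the misstep: the trivial quandle with two elements satisfies condition $(2)$ (each stabilizer acts transitively on the one-point complement) yet is not connected, so any argument that uses the hypothesis only through nonemptiness of $X \setminus \{x\}$ cannot be valid.

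The repair is exactly where $\# X \geq 3$ genuinely enters, and it is the paper's argument: given $x \neq y$, choose a third point $z \in X \setminus \{x,y\}$; then $x, y \in X \setminus \{z\}$, and transitivity of $G_z$ on $X \setminus \{z\}$ yields $f \in G_z \subset G$ with $f(x) = y$, proving connectedness. With that step corrected, the rest of your proof goes through.
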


\begin{proof}
We show (1) $\Rightarrow$ (2). 
Take any $x \in X$. 
To show the transitivity, 
take any $y_1, y_2 \in X \setminus \{ x \}$. 
Since $x \neq y_1$ and $x \neq y_2$, 
there exists $f \in G$ such that $f(x, y_1) = (x, y_2)$ by (1). 
This means $f \in G_x$ and $f(y_1) = y_2$. 

We show (2) $\Rightarrow$ (3). 
We have only to show that $X$ is connected. 
Take any $x, y \in X$. 
Since $\# X \geq 3$, there exists $z \in X \setminus \{ x,y \}$. 
By (2), 
the action of $G_z$ on $X \setminus \{ z \}$ is transitive. 
Since $x, y \in X \setminus \{ z \}$, 
there exists $f \in G_z$ such that $f(x) = y$. 

We show (3) $\Rightarrow$ (1). 
Take any $(x_1 , x_2), (y_1 , y_2) \in X \times X$ 
satisfying $x_1 \neq x_2$ and $y_1 \neq y_2$. 
By (3), there exists $x \in X$ such that 
the action of $G_x$ on $X \setminus \{ x \}$ is transitive. 
Also by (3), 
$X$ is connected, 
and hence there exist $f_1 , f_2 \in G$ such that 
$f_1 (x_1) = x$ and $f_2 (y_1) = x$. 
One now has 
\begin{align}
f_1 (x_1 , x_2) = (x , f_1(x_2)) , \quad 
f_2 (y_1 , y_2) = (x , f_2(y_2)) . 
\end{align}
Note that $f_1(x_2) , f_2(y_2) \in X \setminus \{ x \}$. 
Thus, there exists $f_3 \in G_x$ such that 
\begin{align}
f_3 (f_1(x_2)) = f_2(y_2) . 
\end{align}
One can see that 
$f_2^{-1} \circ f_3 \circ f_1 \in G$ 
maps $(x_1 , x_2)$ onto $(y_1 , y_2)$. 
\end{proof}

Examples of two-point homogeneous quandles will be given later. 
Here we see examples of quandles which are not two-point homogeneous. 
This shows that the condition of the two-point homogeneity is very strong. 

\begin{Ex}
The dihedral quandle of order $n \geq 4$ is not two-point homogeneous. 
\end{Ex}

\begin{proof}
Let $X$ be the dihedral quandle of order $n$, 
which we identify with the set of $n$-equal dividing points 
on the unit circle $S^1$ centered at $(0,0) \in \R^2$. 
Let $\mathrm{O}(2)$ be the orthogonal group, 
which is naturally acting on $S^1$. 
Since $s_x \in \mathrm{O}(2)$ for every $x$, 
we have $G \subset \mathrm{O}(2)$. 

Take $x \in X$. 
The isotropy subgroup at $x$ satisfies 
\begin{align}
G_x \subset \mathrm{O}(2)_x = \{ \id , s_x \} \cong \Z_2 . 
\end{align}
By assumption, 
one has $\# (X \setminus \{ x \}) \geq 3$. 
Therefore, $G_x$ cannot act transitively on $X \setminus \{ x \}$. 
From the characterization given in Proposition \ref{prop:isotropic}, 
$X$ is not two-point homogeneous. 
\end{proof}

\subsection{A sufficient condition}

\label{section:c-type}

From now on we assume that 
a quandle $X = (X,s)$ is finite and satisfies $\# X \geq 3$. 
In this subsection, 
we give a sufficient condition, 
called the cyclic type property, 
for quandles to be two-point homogeneous. 
We also see some basic properties of them. 

\begin{Def}
A quandle $(X,s)$ with $\# X = n \geq 3$ is said to be of 
\textit{cyclic type} 
if, for every $x \in X$, 
$s_x$ acts on $X \setminus \{ x \}$ as a cyclic permutation of order $(n-1)$, 
\end{Def}

Let us denote by $\langle s_x \rangle$ the cyclic group generated by $s_x$. 
The cyclic type property is a sufficient condition for quandles 
to be two-point homogeneous. 

\begin{Prop}
\label{prop:c-TPH}
Every quandle of cyclic type is two-point homogeneous. 
\end{Prop}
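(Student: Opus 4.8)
The plan is to verify condition (3) of Proposition \ref{prop:isotropic}: namely, that a quandle $X$ of cyclic type with $\#X = n \geq 3$ is connected, and that for some (indeed every) $x \in X$ the isotropy subgroup $G_x$ acts transitively on $X \setminus \{x\}$. The second part is essentially immediate from the definition of cyclic type: for a fixed $x$, the element $s_x \in G$ fixes $x$, so $s_x \in G_x$, and by hypothesis $s_x$ acts on the $(n-1)$-element set $X \setminus \{x\}$ as a cyclic permutation of full order $n-1$. Hence $\langle s_x \rangle \subseteq G_x$ already acts transitively on $X \setminus \{x\}$, and a fortiori so does $G_x$.

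The remaining task is to prove connectedness. First I would observe that it suffices to show: for any two points $x, y \in X$ there is an element of $G$ carrying $x$ to $y$. Given $x \neq y$ (the case $x = y$ being trivial), both $x$ and $y$ lie in the complement $X \setminus \{z\}$ of any third point $z$, which exists because $n \geq 3$; since $s_z$ acts cyclically and transitively on $X \setminus \{z\}$, some power $s_z^k$ sends $x$ to $y$, and $s_z^k \in G$. This is exactly the argument already used in the proof of the implication (2) $\Rightarrow$ (3) in Proposition \ref{prop:isotropic}, so in fact connectedness follows formally once transitivity of $G_x$ on $X \setminus \{x\}$ is in hand. Therefore I would simply check hypothesis (2) of Proposition \ref{prop:isotropic} directly and invoke the proposition.

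Concretely, the proof reduces to two lines: for every $x \in X$, the cyclic permutation $s_x$ of order $n-1$ on $X \setminus \{x\}$ witnesses transitivity of $G_x$ on $X \setminus \{x\}$, so condition (2) of Proposition \ref{prop:isotropic} holds, and that proposition then gives two-point homogeneity. I do not anticipate a genuine obstacle here; the only point requiring the hypothesis $\#X \geq 3$ is that Proposition \ref{prop:isotropic} is stated under that assumption, which is built into the definition of cyclic type anyway. The conceptual content is entirely packaged into the earlier characterization, so this proof is a short deduction rather than a construction.
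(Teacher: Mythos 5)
Your proof is correct and is essentially the paper's own argument: the paper likewise notes that $\langle s_x\rangle \subset G_x$ acts transitively on $X\setminus\{x\}$ for every $x$, so condition (2) of Proposition \ref{prop:isotropic} holds and two-point homogeneity follows. Your extra discussion of connectedness is harmless but, as you yourself observe, unnecessary once condition (2) is checked directly.
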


\begin{proof}
Let $X$ be a quandle of cyclic type. 
Take any $x \in X$. 
From Proposition \ref{prop:isotropic}, 
we have only to show that 
the action of $G_x$ on $X \setminus \{ x \}$ is transitive. 
By assumption, 
$\langle s_x \rangle$ acts transitively on $X \setminus \{ x \}$. 
Thus, so does $G_x$, since $\langle s_x \rangle \subset G_x$. 
\end{proof}

In the remaining of this subsection, 
we study some properties of quandles of cyclic type. 
The first one is the invariance under duality. 

\begin{Prop}
If $(X,s)$ is of cyclic type, 
then so is the dual quandle $(X,s^{-1})$. 
\end{Prop}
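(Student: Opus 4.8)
The statement to prove is: if $(X,s)$ is of cyclic type, then so is its dual quandle $(X,s^{-1})$. The plan is to unwind the definition of cyclic type and show that the defining property transfers verbatim to $s^{-1}$. Fix $x \in X$ and set $n := \#X$. By hypothesis, $s_x$ restricts to a bijection of $X \setminus \{x\}$ onto itself (note $s_x(x) = x$ by (S1), so $s_x$ does preserve the complement), and this restriction is a cyclic permutation of order $n-1$, i.e.\ it generates a cyclic group acting simply transitively on the $(n-1)$-element set $X \setminus \{x\}$.

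First I would observe that $s_x^{-1}(x) = x$, which follows immediately from $s_x(x) = x$ and the bijectivity of $s_x$ (S2); hence $s_x^{-1}$ also restricts to a bijection of $X \setminus \{x\}$. Next, the key point: the restriction of $s_x^{-1}$ to $X \setminus \{x\}$ is precisely the inverse of the restriction of $s_x$ to $X \setminus \{x\}$. Since the inverse of a cyclic permutation of order $n-1$ is again a cyclic permutation of order $n-1$ — indeed $\langle \sigma \rangle = \langle \sigma^{-1} \rangle$ for any permutation $\sigma$, and the order is unchanged — we conclude that $s_x^{-1}$ acts on $X \setminus \{x\}$ as a cyclic permutation of order $n-1$. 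As $x \in X$ was arbitrary, this shows $(X,s^{-1})$ is of cyclic type.

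There is no real obstacle here; the argument is essentially a one-line observation once one notes that passing to the dual quandle replaces each $s_x$ by $s_x^{-1}$ (by the definition of dual quandle), and that the cyclic-type condition is manifestly symmetric under inversion. The only point requiring the smallest care is checking that $s_x^{-1}$ genuinely fixes $x$ and thus acts on the complement, so that the phrase "cyclic permutation of order $n-1$" is meaningful for $s_x^{-1}$ in the same way it is for $s_x$; this is handled by the remark above. Everything else is routine group theory of a single cyclic subgroup of the symmetric group on $n-1$ letters.
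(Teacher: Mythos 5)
Your proposal is correct and takes essentially the same approach as the paper: observe that the dual quandle replaces each $s_x$ by $s_x^{-1}$, and that the inverse of a cyclic permutation of order $n-1$ on $X \setminus \{x\}$ is again such a permutation. The paper's proof is exactly this, stated more briefly; your extra remarks (that $s_x^{-1}$ fixes $x$ and hence acts on the complement) simply spell out details the paper leaves implicit.
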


\begin{proof}
Take any $x \in X$. 
By assumption, 
$s_x$ acts on $X \setminus \{ x \}$ as a cyclic permutation of order $(n-1)$. 
Then, so does $s^{-1}_x$. 
\end{proof}

The second property of quandles of cyclic type is a characterization, 
similar to Proposition \ref{prop:isotropic} for two-point homogeneous ones. 
It will be stated after a small lemma. 

\begin{Lem}
\label{lem:conjugate}
Let $X = (X,s)$ be a connected quandle. 
Then, for every $x,y \in X$, 
there exists $f \in G$ such that $s_y = f \circ s_x \circ f^{-1}$. 
\end{Lem}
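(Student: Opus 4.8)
The plan is to exploit connectedness together with the fact that every element of $G = \mathrm{Inn}(X,s)$ is a quandle automorphism, and that automorphisms conjugate the maps $s_x$ in the expected way. First I would recall that, by definition of a homomorphism applied to the case of an automorphism $f \in \Aut(X,s)$, one has $f \circ s_x = s_{f(x)} \circ f$ for every $x \in X$; equivalently, $f \circ s_x \circ f^{-1} = s_{f(x)}$. Since $\mathrm{Inn}(X,s) \subset \Aut(X,s)$, this identity holds for every $f \in G$.

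Next, since $X$ is connected by hypothesis, $G$ acts transitively on $X$. Hence, given $x, y \in X$, there exists $f \in G$ with $f(x) = y$. Combining this with the conjugation identity above yields
\begin{align}
f \circ s_x \circ f^{-1} = s_{f(x)} = s_y ,
\end{align}
which is exactly the assertion of the lemma.

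There is essentially no obstacle here: the only point requiring care is the bookkeeping of which identity is being invoked, namely that the intertwining relation $f \circ s^X_x = s^X_{f(x)} \circ f$ defining homomorphisms, when specialized to automorphisms $f \in G$, gives the conjugation formula $s_{f(x)} = f \circ s_x \circ f^{-1}$. After that, transitivity of the $G$-action supplied by connectedness finishes the argument immediately.
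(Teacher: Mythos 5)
Your argument is correct and is essentially identical to the paper's proof: both use connectedness to pick $f \in G$ with $f(x) = y$, then apply the automorphism property $f \circ s_x = s_{f(x)} \circ f$ to conclude $s_y = f \circ s_x \circ f^{-1}$.
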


\begin{proof}
Take any $x,y \in X$. 
Since $X$ is connected, 
there exists $f \in G$ such that $y = f(x)$. 
Since $f$ is an automorphism, we have 
\begin{align}
f \circ s_x = s_{f(x)} \circ f = s_y \circ f . 
\end{align}
This completes the proof. 
\end{proof}

\begin{Prop}
\label{prop:cyclic-isotropic}
Let $X = (X,s)$ be a quandle with $\# X = n \geq 3$. 
Then the following conditions are mutually equivalent$:$ 
\begin{enumerate}
\item
$X$ is of cyclic type, 
\item
$X$ is connected, and 
there exists $x \in X$ such that 
$s_x$ acts on $X \setminus \{ x \}$ as a cyclic permutation of order $(n-1)$, 
\item
$X$ is connected, and 
there exists $x \in X$ such that 
$\langle s_x \rangle$ acts transitively on $X \setminus \{ x \}$. 
\end{enumerate}
\end{Prop}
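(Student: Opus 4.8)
The plan is to prove the equivalence of (1), (2), (3) by the cycle $(1)\Rightarrow(2)\Rightarrow(3)\Rightarrow(1)$. The implications $(1)\Rightarrow(2)$ and $(2)\Rightarrow(3)$ are essentially formal. For $(1)\Rightarrow(2)$, note that cyclic type already gives, for \emph{every} $x$, that $s_x$ acts on $X\setminus\{x\}$ as an $(n-1)$-cycle, so the second clause of (2) holds trivially; connectedness of $X$ follows because a quandle of cyclic type is two-point homogeneous by Proposition \ref{prop:c-TPH}, hence connected by Proposition \ref{prop:isotropic}. (Alternatively one can argue connectedness directly: $\langle s_x\rangle\subset G$ already acts transitively on $X\setminus\{x\}$, and since $n\geq 3$ one can move any point off $x$ and then to any target.) For $(2)\Rightarrow(3)$, observe that if $s_x$ is an $(n-1)$-cycle on the $(n-1)$-element set $X\setminus\{x\}$, then $\langle s_x\rangle$ acts transitively there.

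The substantive implication is $(3)\Rightarrow(1)$. Here I am given connectedness together with one distinguished point $x_0$ at which $\langle s_{x_0}\rangle$ acts transitively on $X\setminus\{x_0\}$, and I must upgrade ``transitive action of a cyclic group on a set of size $n-1$'' to ``generator acts as a single $(n-1)$-cycle'', and then propagate this from $x_0$ to all points $x$. The point-transfer step is handled by Lemma \ref{lem:conjugate}: for any $x\in X$ connectedness yields $f\in G$ with $s_x=f\circ s_{x_0}\circ f^{-1}$, and $f$ restricts to a bijection $X\setminus\{x_0\}\to X\setminus\{x\}$ (since $f(x_0)=x$), so the cycle structure of $s_x$ on $X\setminus\{x\}$ is conjugate to, hence the same as, that of $s_{x_0}$ on $X\setminus\{x_0\}$. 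Thus it suffices to check that $s_{x_0}$ itself is an $(n-1)$-cycle on $X\setminus\{x_0\}$.

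For that last point, the key observation is the quandle axiom (Q1)/(S1): $s_{x_0}(x_0)=x_0$, so $s_{x_0}$ genuinely preserves $X\setminus\{x_0\}$ and its restriction there is a permutation of an $(n-1)$-element set whose cyclic group acts transitively. A permutation of a finite set whose generated cyclic group is transitive must be a single cycle on that set: if it had two or more cycles, no power of it could send a point of one cycle into another. Hence $s_{x_0}|_{X\setminus\{x_0\}}$ is an $(n-1)$-cycle, and combined with the conjugation argument above every $s_x$ acts on $X\setminus\{x\}$ as an $(n-1)$-cycle, i.e.\ $X$ is of cyclic type.

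The only mild subtlety — and the place I would be most careful — is the bookkeeping in $(3)\Rightarrow(1)$ around the distinction between ``$s_x$ restricted to $X\setminus\{x\}$'' as a permutation of that specific set versus as a permutation of $X$ fixing $x$: one must use (S1) to know the restriction makes sense, and one must check that the conjugating automorphism $f$ from Lemma \ref{lem:conjugate} carries $X\setminus\{x_0\}$ bijectively onto $X\setminus\{x\}$ so that conjugation really does transport the cycle type. Neither is hard, but stating them cleanly is where the proof earns its keep; the ``transitive cyclic group $\Rightarrow$ single cycle'' fact is elementary and can be invoked without much ado.
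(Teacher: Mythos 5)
Your proof is correct and uses essentially the same ingredients as the paper: connectedness via Propositions \ref{prop:c-TPH} and \ref{prop:isotropic}, and Lemma \ref{lem:conjugate} to transport the cycle structure from one point to all points. The only difference is organizational (you run the cycle $(1)\Rightarrow(2)\Rightarrow(3)\Rightarrow(1)$ and spell out the elementary ``transitive cyclic action $\Rightarrow$ single $(n-1)$-cycle'' step that the paper dismisses as obvious in its $(2)\Leftrightarrow(3)$), which is harmless.
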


\begin{proof}
We prove (1) $\Rightarrow$ (2). 
Assume that $X$ is of cyclic type. 
We have only to show that $X$ is connected. 
One knows $X$ is two-point homogeneous from Proposition \ref{prop:c-TPH}, 
and hence connected from Proposition \ref{prop:isotropic}. 

We prove (2) $\Rightarrow$ (1). 
Assume that $X$ is connected, and 
there exists $x \in X$ such that 
$s_x$ acts on $X \setminus \{ x \}$ as a cyclic permutation of order $(n-1)$. 
Take any $y \in X$. 
Since $X$ is connected, 
Lemma \ref{lem:conjugate} yields that $s_y$ is conjugate to $s_x$. 
Thus, $s_y$ acts as a cyclic permutation of order $(n-1)$, since so does $s_x$. 
Hence $X$ is of cyclic type. 

The equivalence (2) $\Leftrightarrow$ (3) is obvious. 
\end{proof}

We here see some easy examples of quandles of cyclic type. 
They thus give examples of two-point homogeneous quandles. 
Further examples will be given in the next section. 

\begin{Ex}
The following quandles are of cyclic type, and hence two-point homogeneous: 
\begin{enumerate}
\item
the dihedral quandle of order $3$, and 
\item
the regular tetrahedron quandle. 
\end{enumerate}
\end{Ex}

\begin{proof}
Recall that both quandles are connected (see Example \ref{ex:connected}). 
Thus, we have only to study the action of $s_x$ 
on $X \setminus \{ x \}$ for some $x$. 

Let $X^3 = \{ 0,1,2 \}$ be the dihedral quandle of order $3$. 
For $x = 0$, 
one has $s_0 = (12)$, 
which acts on $X^3 \setminus \{ 0 \} = \{ 1,2 \}$ as a cyclic permutation 
of order $2$. 

Let $X^4 = \{ 1,2,3,4 \}$ be the regular tetrahedron quandle. 
For $x=1$, 
one has $s_1 = (234)$, 
which acts on $X^4 \setminus \{ 1 \} = \{ 2 , 3 , 4 \}$ 
as a cyclic permutation of order $3$. 
\end{proof}

\section{Classification for prime cardinality case}

\label{section:prime-cardinality}

In this section we classify two-point homogeneous quandles 
with prime cardinality $p \geq 3$. 
The proof is based on the classification of 
connected quandles with prime cardinality, 
namely, they must be linear Alexander quandles $\Lambda_p / (t-a)$. 
We determine the inner automorphism groups of $\Lambda_p / (t-a)$, 
and apply it to our classification. 

\subsection{Review on linear Alexander quandles}

\label{subsection:classification1}

In this subsection, 
we review some known results on linear Alexander quandles. 
In fact, every connected quandle with prime cardinality is 
isomorphic to some linear Alexander quandle. 

\begin{Def}
Let $\Lambda_n := \Z_n [t^{\pm 1}]$ be the Laurant polynomial ring over 
$\Z_n = \Z / n \Z$. 
The quotient $\Lambda_n / J$ by an ideal $J$ equipped with the 
following operator is called the \textit{Alexander quandle}: 
\begin{align}
s_{[x]}([y]) := [ty + (1-t)x] . 
\end{align}
\end{Def}

It is easy to check that $\Lambda_n / J$ is a quandle, 
that is, the above operator satisfies the conditions (S1), (S2) and (S3). 

\begin{Def}
Let $a \in \Z$ and $J := (t-a)$, the ideal generated by $t-a$ in $\Lambda_n$. 
Then, the Alexander quandle of the form $\Lambda_n / (t-a)$ is said to be 
\textit{linear}. 
\end{Def}

For a linear Alexander quandle, 
there is a natural identification $\Lambda_n / (t-a) = \Z_n$. 
The quandle operation can be written, 
in terms of the addition and the multiplication on $\Z_n$, as follows: 
\begin{align}
\label{eq:Alexander-operation}
s_{[x]}([y]) := [ty + (1-t)x] 
= [a] [y] + [1-a] [x] . 
\end{align}

\begin{Ex}
The linear Alexander quandle $\Lambda_n / (t-1)$ is trivial, 
and $\Lambda_n / (t+1)$ is isomorphic to the dihedral quandle of order $n$. 
\end{Ex}

\begin{proof}
We only prove the second assertion. 
Consider $\Lambda_n / (t+1)$, that is, $a = n-1$. 
Then, (\ref{eq:Alexander-operation}) yields that 
\begin{align}
s_{[x]} ([y]) = [n-1] [y] + [1 - (n-1)] [x] = [2x-y] . 
\end{align}
This shows that $\Lambda_n / (t+1)$ is the dihedral quandle 
(see Example \ref{ex:dihedral}). 
\end{proof}

From now on we concern with the case $n = p$, a prime number. 
We recall some results obtained by Nelson (\cite{Nelson}). 

\begin{Prop}[\cite{Nelson}]
\label{prop:Nelson}
Let $p$ be a prime number. 
Then we have the following$:$ 
\begin{enumerate}
\item
There are exactly $p-1$ distinct linear Alexander quandles with cardinality $p$ 
up to isomorphism. 
They are $\Lambda_p / (t-a)$ for $a = 1, 2, \ldots, p-1$. 
\item
If $a = 2, \ldots, p-1$, then $\Lambda_p / (t-a)$ is connected. 
\item
$\Lambda_p / (t-a)$ is dual to $\Lambda_p / (t-b)$ 
if and only if 
$ab \equiv 1$ $(\mathrm{mod} \, p)$. 
\end{enumerate}
\end{Prop}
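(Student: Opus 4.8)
The plan is to prove part (1) first, then read off parts (2) and (3) from it together with the formula (\ref{eq:Alexander-operation}). Throughout I identify $\Lambda_p/(t-a)$ with $\Z_p$ via that formula, so that $s_x(y) = ay + (1-a)x$, with arithmetic in $\Z_p$.

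For the enumeration in part (1): since $s_x$ is multiplication by $a$ followed by a translation, axiom (S2) holds for $\Lambda_p/(t-a)$ if and only if $a$ is a unit in $\Z_p$, i.e.\ $a \in \{1, \ldots, p-1\}$ modulo $p$; and for each such $a$ all of (S1)--(S3) hold, while the underlying set $\Z_p$ already has cardinality $p$. So these $p-1$ quandles are exactly the linear Alexander quandles of cardinality $p$, and it remains to show they are pairwise non-isomorphic. The value $a = 1$ gives the trivial quandle (every $s_x = \id$), isomorphic to none of the others. For $a \in \{2, \ldots, p-1\}$ I first observe -- this will also prove part (2) -- that $s_x \circ s_0^{-1}$ is the translation $y \mapsto y + (1-a)x$, and since $1-a$ is a unit these run through all translations of $\Z_p$ as $x$ varies; hence $\mathrm{Inn}(\Lambda_p/(t-a))$ contains every translation and therefore acts transitively, so $\Lambda_p/(t-a)$ is connected. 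Now suppose $f : \Lambda_p/(t-a) \to \Lambda_p/(t-b)$ is an isomorphism with $a, b \in \{2, \ldots, p-1\}$. Replacing $f$ by the composition of $f$ with a suitable translation of the target (an inner automorphism), I may assume $f(0) = 0$. Specializing the homomorphism identity $f(ay + (1-a)x) = b f(y) + (1-b)f(x)$ at $x = 0$ and at $y = 0$ gives $f(ay) = b f(y)$ and $f((1-a)x) = (1-b)f(x)$; substituting $u = ay$ and $v = (1-a)x$ (legitimate since $a$ and $1-a$ are units) then yields $f(u+v) = f(u) + f(v)$. Thus $f$ is an additive bijection of $\Z_p$, hence multiplication by a unit $c$, and $f(ay) = b f(y)$ forces $a = b$.

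For part (3): inverting $s_x(y) = ay + (1-a)x$ gives $s_x^{-1}(y) = a^{-1}y + (1 - a^{-1})x$, so the dual quandle $(\Lambda_p/(t-a), s^{-1})$ is, verbatim, the linear Alexander quandle $\Lambda_p/(t - a^{-1})$. Since passing to the dual commutes with isomorphism, $\Lambda_p/(t-a)$ is dual to $\Lambda_p/(t-b)$ if and only if $\Lambda_p/(t-b) \cong \Lambda_p/(t - a^{-1})$, which by part (1) holds exactly when $b \equiv a^{-1}$, i.e.\ $ab \equiv 1 \pmod{p}$.

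The only step that needs real care is the additivity of the normalized isomorphism $f$ in part (1); the substitution above is the whole trick, and the rest is routine bookkeeping. Alternatively, one could simply invoke \cite{Nelson}.
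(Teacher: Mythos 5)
Your proof is correct. Note, though, that the paper itself offers no proof of Proposition \ref{prop:Nelson}: it is quoted from Nelson's classification of finite Alexander quandles, and the only argument given in the text is the remark after Lemma \ref{lem:inner-2} that the translations $\varphi_{[m]}$ lie in $G$, which reproves part (2). Your part (2) is that same idea in a slightly cleaner form ($s_x\circ s_0^{-1}$ is the translation by $(1-a)x$, versus the paper's $s_{[1-a]^{-1}}\circ s_{[0]}^{p-2}=\varphi_{[1]}$). For parts (1) and (3) you supply what the paper delegates to \cite{Nelson}: normalizing an isomorphism by a translation so that $f(0)=0$, extracting $f(ay)=bf(y)$ and $f((1-a)x)=(1-b)f(x)$, and deducing additivity by the substitution $u=ay$, $v=(1-a)x$, so that $f$ is multiplication by a unit and $a=b$; then computing $s_x^{-1}(y)=a^{-1}y+(1-a^{-1})x$ to identify the dual with $\Lambda_p/(t-a^{-1})$ and invoking part (1). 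This is essentially the specialization to $\Z_p$ of Nelson's general criterion, and it makes the proposition self-contained rather than cited, at the cost of a page of computation. Two cosmetic points: the case $p\mid a$ is excluded not because (S2) fails but because $t$ is a unit in $\Lambda_p$, so $\Lambda_p/(t)$ is the one-element quandle and never has cardinality $p$; and the non-isomorphism of $\Lambda_p/(t-1)$ with the other quandles deserves one explicit word (the trivial quandle is not connected, or $s_0\neq\id$ is preserved under isomorphism), though this is immediate from what you prove.
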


Now we recall the classification of connected quandles with prime cardinality 
obtained in \cite{EGS}. 
We also refer to \cite[Section 5]{Ohtsuki}. 

\begin{Thm}[\cite{EGS}]
\label{thm:classification-prime}
Every connected quandle with prime cardinality $p$ 
is isomorphic to a linear Alexander quandle $\Lambda_p / (t-a)$ 
for some $a = 2, 3, \ldots, p-1$. 
\end{Thm}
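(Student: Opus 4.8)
The plan is to analyse the permutation group $G := \mathrm{Inn}(X,s)$ acting on $X$. By connectedness $G$ acts transitively on $X$, a set of prime cardinality $p$, so $G$ is a transitive permutation group of prime degree. The whole argument rests on combining Burnside's classical theorem on transitive groups of prime degree with the strong constraints the quandle axioms impose on $G$.

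First I would dispose of the \emph{affine} alternative in Burnside's theorem. If $G$ is not $2$-transitive, then Burnside's theorem gives a normal Sylow $p$-subgroup $P \cong \Z_p$, which is necessarily regular; fixing $x_0 \in X$ and identifying $X$ with $\Z_p$ by $k \mapsto \rho^k(x_0)$ for a generator $\rho$ of $P$, we get $G \le N_{\mathrm{Sym}(\Z_p)}(P) = \mathrm{AGL}(1,p)$. Hence every $s_x$ is an affine map $y \mapsto \alpha_x y + \beta_x$ with $\alpha_x \in \Z_p^{\times}$, and axiom (S1) ($s_x(x)=x$) forces $\beta_x = (1-\alpha_x)x$, so $s_x(y) = \alpha_x y + (1-\alpha_x)x$. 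By connectedness and Lemma~\ref{lem:conjugate} all the $s_x$ are conjugate in $G \le \mathrm{AGL}(1,p)$; since the ``linear part'' map $\mathrm{AGL}(1,p) \to \Z_p^{\times}$ is a homomorphism onto an abelian group, it is constant on conjugacy classes, so $\alpha_x =: a$ is independent of $x$. Therefore $s_x(y) = ay + (1-a)x$, which is precisely the operation \eqref{eq:Alexander-operation} of $\Lambda_p/(t-a)$, and $a \ne 1$ because a trivial quandle with $p \ge 2$ points is disconnected. Thus $X \cong \Lambda_p/(t-a)$ with $a \in \{2,\dots,p-1\}$. This step is routine, and it is the part that actually produces the explicit list.

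The real work is to exclude the case that $G$ is $2$-transitive but not contained in $\mathrm{AGL}(1,p)$. Here I would first record the constraints coming from the quandle. Put $z := s_{x_0}$. For $g \in G_{x_0}$ we have $g z g^{-1} = s_{g(x_0)} = s_{x_0} = z$, so $z \in Z(G_{x_0})$; moreover $z \ne \id$ (else all $s_x = \id$, contradicting connectedness) and $\{s_x : x \in X\} = z^G$ by transitivity, so $G = \langle z^G\rangle$. If $G$ is $2$-transitive then $G_{x_0}$ is transitive on $X \setminus \{x_0\}$; since $z$ centralises $G_{x_0}$, any fixed point of $z$ other than $x_0$ would, by $G_{x_0}$-transitivity and the commuting of $z$ with $G_{x_0}$, force $z$ to fix all of $X \setminus \{x_0\}$, hence $z = \id$ — so $z$ fixes $x_0$ and nothing else, and being centralised by the transitive group $G_{x_0}$ it acts semiregularly on $X \setminus \{x_0\}$. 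In addition, the conjugates of $z$ must pairwise satisfy $s_a s_b = s_{s_a(b)} s_a$. Now I would feed these into the classification of $2$-transitive groups of prime degree $p$: after the above identification, $G$ is either $\le \mathrm{AGL}(1,p)$ (handled above), or $A_p \le G \le S_p$, or $\mathrm{PSL}(d,q) \le G \le \mathrm{P\Gamma L}(d,q)$ with $p = (q^d-1)/(q-1)$, or one of $\mathrm{PSL}(2,11)$, $M_{11}$ ($p=11$), $M_{23}$ ($p=23$). The alternating and symmetric cases are immediate: a point stabiliser is $A_{p-1}$ or $S_{p-1}$, with trivial centre for $p \ge 5$, contradicting $\id \ne z \in Z(G_{x_0})$ (the case $p=3$ is covered by the affine step, where the unique connected quandle is the dihedral one $\Lambda_3/(t-2)$). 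For each of the remaining families I would use the explicit point stabilisers together with the constraints above — centrality of $z$, $G = \langle z^G\rangle$, the prescribed semiregular cycle type of $z$, and the compatibility relation for the conjugates of $z$ — to show that no such quandle exists, so that only the affine alternative survives.

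I expect this last step to be the entire difficulty. The affine analysis is elementary; by contrast, excluding the non-affine $2$-transitive groups is routine only for $A_p$ and $S_p$, while the projective-linear and Mathieu cases require genuine case-by-case work, and even the classification of $2$-transitive groups of prime degree underlying it is deep. This is presumably why the result is quoted from \cite{EGS} (see also \cite[Section 5]{Ohtsuki}) rather than proved here.
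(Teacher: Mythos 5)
There is a genuine gap. The paper itself gives no proof of this theorem --- it is quoted from \cite{EGS} --- so the only question is whether your sketch actually constitutes one, and it does not. The affine half is fine: Burnside's theorem for transitive groups of prime degree, the identification $G\le \mathrm{AGL}(1,p)$ in the non-$2$-transitive case, the fixed-point condition (S1) forcing $s_x(y)=\alpha_x y+(1-\alpha_x)x$, constancy of $\alpha_x$ via Lemma \ref{lem:conjugate} and the abelian linear-part homomorphism, and $a\neq 0,1$ by invertibility and connectedness; this correctly recovers the operation \eqref{eq:Alexander-operation} and the list $a=2,\dots,p-1$. Likewise your observations that $z:=s_{x_0}$ is a nontrivial element of $Z(G_{x_0})$, fixes only $x_0$, acts semiregularly on $X\setminus\{x_0\}$, and that $G=\langle z^G\rangle$, are correct, and they do dispose of $A_p$ and $S_p$.

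But the decisive step --- showing that no connected quandle of prime order can have $G$ equal to one of the remaining $2$-transitive groups ($\mathrm{PSL}(d,q)\le G\le \mathrm{P\Gamma L}(d,q)$ with $p=(q^d-1)/(q-1)$, $\mathrm{PSL}(2,11)$, $M_{11}$, $M_{23}$) --- is not carried out at all; you only list the constraints you ``would use.'' Nothing in the write-up verifies, for instance, that the point stabilizers in these groups (maximal parabolics possibly extended by field automorphisms, $A_5$, $M_{10}$, $M_{22}$) admit no nontrivial central element that is semiregular on the remaining $p-1$ points and whose conjugacy class generates $G$ and satisfies the quandle identity $s_a s_b=s_{s_a(b)}s_a$. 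That verification is precisely the mathematical content of the theorem beyond the easy affine case (and it additionally rests on the CFSG-dependent list of $2$-transitive groups of prime degree), which is why the paper cites \cite{EGS} rather than proving it. As it stands, your proposal is a plausible strategy with its hardest step left as a promise, not a proof of the statement.
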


Recall that two-point homogeneous quandles must be connected 
(Proposition \ref{prop:isotropic}). 
Therefore, for the classification of prime cardinality ones, 
we have only to determine which linear Alexander quandles are 
two-point homogeneous or not. 

\subsection{The inner automorphism groups of linear Alexander quandles}

\label{subsection:classification2}

Let $X = \Lambda_p / (t-a)$ be a linear Alexander quandle 
with prime cardinality $p$, 
where $a = 2, 3, \ldots, p-1$. 
In this subsection, 
we determine the inner automorphism groups $G = \mathrm{Inn}(X)$ of $X$. 
We identify $X = \Z_p$ as in the previous subsection. 

Recall that $G$ is the group generated by $\{ s_{[x]} \mid [x] \in X \}$. 
First of all, we see formulas for the compositions of generators. 

\begin{Lem}
\label{lem:inner-1}
For $[x], [x_1], [x_2] \in X$ and $k, k_1, k_2 \in \Z$, we have 
\begin{align}
\label{eq:1-inner-1}
(s_{[x]})^k ([y]) 
& = [a^k] [y] + [1-a^k] [x] , \\ 
\label{eq:2-inner-1}
(s_{[x_1]})^{k_1} (s_{[x_2]})^{k_2} ([y]) 
& = [a^{k_1 + k_2}] [y] + [a^{k_1}] [1-a^{k_2}] [x_2] + [1-a^{k_1}] [x_1] . 
\end{align}
\end{Lem}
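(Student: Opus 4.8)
The plan is to prove both formulas by a short induction on the relevant exponents, using only the explicit operation formula~\eqref{eq:Alexander-operation}, which under the identification $X = \Z_p$ reads $s_{[x]}([y]) = [a][y] + [1-a][x]$.

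\textbf{Formula \eqref{eq:1-inner-1}.} First I would establish $(s_{[x]})^k([y]) = [a^k][y] + [1-a^k][x]$ by induction on $k \geq 0$. The base case $k=0$ gives the identity map, and $k=1$ is exactly \eqref{eq:Alexander-operation}. For the inductive step, I would apply $s_{[x]}$ to the expression for $(s_{[x]})^k([y])$: writing $[z] := (s_{[x]})^k([y]) = [a^k][y] + [1-a^k][x]$, one computes
\begin{align*}
s_{[x]}([z]) = [a][z] + [1-a][x] = [a^{k+1}][y] + [a - a^{k+1}][x] + [1-a][x] = [a^{k+1}][y] + [1-a^{k+1}][x],
\end{align*}
which is the claim for $k+1$. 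Since $a$ is a unit in $\Z_p$ (because $a = 2,\dots,p-1$), the same formula extends to negative $k$ as well; alternatively one checks directly that $[a^{-1}][y] + [1-a^{-1}][x]$ is the inverse of $s_{[x]}$, matching $k = -1$.

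\textbf{Formula \eqref{eq:2-inner-1}.} Next I would simply substitute: apply \eqref{eq:1-inner-1} with exponent $k_2$ and base point $[x_2]$ to get $[w] := (s_{[x_2]})^{k_2}([y]) = [a^{k_2}][y] + [1-a^{k_2}][x_2]$, then apply \eqref{eq:1-inner-1} with exponent $k_1$ and base point $[x_1]$ to $[w]$:
\begin{align*}
(s_{[x_1]})^{k_1}([w]) = [a^{k_1}][w] + [1-a^{k_1}][x_1] = [a^{k_1+k_2}][y] + [a^{k_1}][1-a^{k_2}][x_2] + [1-a^{k_1}][x_1],
\end{align*}
which is exactly \eqref{eq:2-inner-1}. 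This uses nothing beyond distributivity of multiplication over addition in the ring $\Z_p$ and the already-proved single-generator formula.

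There is essentially no hard part here: the statement is a bookkeeping computation, and the only point requiring a word of care is the extension of \eqref{eq:1-inner-1} to negative exponents, which is legitimate precisely because $[a]$ is invertible in $\Z_p$ for $a \neq 0$ — a fact guaranteed by the standing hypothesis $a \in \{2, \dots, p-1\}$ and the primality of $p$. If one prefers, one can avoid negative exponents in the induction entirely and instead verify the $k = -1$ case by a direct check that the proposed inverse composes with $s_{[x]}$ to the identity, then induct in the negative direction as well.
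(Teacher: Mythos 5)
Your proposal is correct and follows essentially the same route as the paper: formula \eqref{eq:1-inner-1} by induction on $k$ using \eqref{eq:Alexander-operation}, and formula \eqref{eq:2-inner-1} by direct substitution of the first formula into itself. Your extra remark on extending to negative $k$ via the invertibility of $[a]$ is a point the paper leaves implicit, and it is handled correctly.
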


\begin{proof}
Recall that $s_{[x]}$ is given by (\ref{eq:Alexander-operation}). 
Then, one can show (\ref{eq:1-inner-1}) by induction. 
The proof of (\ref{eq:2-inner-1}) easily follows from (\ref{eq:1-inner-1}). 
\end{proof}

Using these formulas, 
we see that $G$ contains some particular transformations. 
For $[m] \in X$, let us define 
\begin{align}
\varphi_{[m]} : X \to X : [x] \mapsto [x+m] . 
\end{align}

\begin{Lem}
\label{lem:inner-2}
For every $[m] \in X$, we have $\varphi_{[m]} \in G$. 
\end{Lem}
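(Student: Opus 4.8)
The plan is to exhibit each translation $\varphi_{[m]}$ as an explicit product of the generators $s_{[x]}$ using the composition formula \eqref{eq:2-inner-1} from Lemma \ref{lem:inner-1}. The key observation is that when we compose $s_{[x_1]}$ and $s_{[x_2]}$ with exponents $k_1$ and $k_2$ satisfying $k_1 + k_2 \equiv 0 \pmod{\mathrm{ord}(a)}$, the coefficient $[a^{k_1+k_2}]$ of $[y]$ becomes $[1]$, so the resulting map is a translation $[y] \mapsto [y] + (\text{constant})$. The remaining task is to check that, as $[x_1], [x_2]$ and the exponents range over their allowed values, the constant $[a^{k_1}][1-a^{k_2}][x_2] + [1-a^{k_1}][x_1]$ can be made to equal any prescribed $[m] \in \Z_p$.

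First I would record that since $a \not\equiv 0, 1 \pmod p$, the element $[1-a]$ is a nonzero element of the field $\Z_p$, hence invertible. The simplest route: take $k_2 = k_1 = 1$ is not quite enough since $a^2$ need not be $1$; instead, let $d = \mathrm{ord}(a)$ in $\Z_p^\times$ and consider $s_{[x_1]} \circ (s_{[x_2]})^{d-1}$, i.e.\ $k_1 = 1$, $k_2 = d-1$. Then $a^{k_1+k_2} = a^d = 1$, so by \eqref{eq:2-inner-1} this composition sends $[y]$ to $[y] + [a][1-a^{d-1}][x_2] + [1-a][x_1]$. Taking $[x_2] = [0]$ this is just $[y] \mapsto [y] + [1-a][x_1]$, and since $[1-a] \neq [0]$, as $[x_1]$ runs over $\Z_p$ the quantity $[1-a][x_1]$ runs over all of $\Z_p$. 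Hence for any $[m]$ we may choose $[x_1] = [1-a]^{-1}[m]$ and obtain $\varphi_{[m]} = s_{[x_1]} \circ (s_{[0]})^{d-1} \in G$.

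The step requiring the most care is simply making sure the algebra in \eqref{eq:2-inner-1} is applied with the correct exponents and that $d-1 \geq 0$, which holds since $d \geq 2$ (as $a \neq 1$); there is no genuine obstacle here, only bookkeeping. One should also note that $(s_{[0]})^{d-1} = (s_{[0]})^{-1}$ in $G$ since $s_{[0]}$ has order dividing $d$, so an equivalent and perhaps cleaner phrasing is $\varphi_{[m]} = s_{[x_1]} \circ s_{[0]}^{-1}$ with $[x_1] = [1-a]^{-1}[m]$; verifying this directly from \eqref{eq:Alexander-operation} is a one-line computation: $s_{[x_1]}(s_{[0]}^{-1}([y])) = s_{[x_1]}([a^{-1}y]) = [y] + [1-a][x_1]$. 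Either way the conclusion $\varphi_{[m]} \in G$ follows, completing the proof.
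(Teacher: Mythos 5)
Your proof is correct and follows essentially the same approach as the paper: the paper writes $\varphi_{[1]} = s_{[1-a]^{-1}}\circ (s_{[0]})^{p-2}$ (using $[a^{p-1}]=[1]$) and then takes powers, while you write $\varphi_{[m]} = s_{[1-a]^{-1}[m]}\circ (s_{[0]})^{d-1}$ directly, which is the same cancellation trick with $\mathrm{ord}(a)$ in place of $p-1$ and general $[m]$ in place of $[1]$.
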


\begin{proof}
Since $\varphi_{[m]} = (\varphi_{[1]})^m$, 
we have only to show that $\varphi_{[1]} \in G$. 
This follows from the following claim: 
\begin{align}
\label{eq:claim-inner-2}
(s_{[1-a]^{-1}}) (s_{[0]})^{p-2} = \varphi_{[1]} . 
\end{align}
Note that $[1-a]$ is invertible. 
We show (\ref{eq:claim-inner-2}). 
Take any $[y] \in X$. 
It follows from Lemma \ref{lem:inner-1} and $[a^{p-1}] = [1]$ that 
\begin{align}
(s_{[1-a]^{-1}}) (s_{[0]})^{p-2} ([y]) 
= [a^{p-1}] [y] + [1-a] [1-a]^{-1} 
= [y] + [1] 
= \varphi_{[1]}([y]) . 
\end{align}
This shows (\ref{eq:claim-inner-2}), 
and hence completes the proof of the lemma. 
\end{proof}

Note that 
$\{ \varphi_{[m]} \mid [m] \in X \}$ 
is a subgroup of $G$, 
and obviously acts transitively on $X$. 
This immediately yields that $X$ is connected. 
Hence, Lemma \ref{lem:inner-2} 
gives a simple proof of Proposition \ref{prop:Nelson} (2). 

We now determine the inner automorphism groups $G$, 
by giving explicit expressions 
in terms of $s_{[x]}$ and $\varphi_{[m]}$

\begin{Thm}
\label{thm:inner-auto-Alexander}
The inner automorphism group of $X = \Lambda_p / (t-a)$ satisfies 
\begin{align}
\label{eq:inner-Alexander}
G = \{ (s_{[x]})^k \mid [x] \in X , \, k \in \Z \} 
\cup \{ \varphi_{[m]} \mid [m] \in X \} . 
\end{align}
\end{Thm}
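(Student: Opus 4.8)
The plan is to show that the right-hand side of \eqref{eq:inner-Alexander} is already a group; since it obviously contains every generator $s_{[x]}$ and is contained in $G$, it must equal $G$. Write $H := \{ (s_{[x]})^k \mid [x] \in X, k \in \Z \} \cup \{ \varphi_{[m]} \mid [m] \in X \}$. Using the identification $X = \Z_p$ and formula \eqref{eq:1-inner-1}, each element $(s_{[x]})^k$ is the affine map $[y] \mapsto [a^k][y] + [1-a^k][x]$; as $[x]$ ranges over $X$ and $k$ over $\Z$ (equivalently $a^k$ over the cyclic subgroup $\langle [a] \rangle \subset \Z_p^\times$), its constant term $[1-a^k][x]$ ranges over all of $\Z_p$ whenever $a^k \neq 1$. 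Together with the translations $\varphi_{[m]}$ (constant term arbitrary, linear coefficient $1$), we see that $H$ is precisely the set of affine maps $[y] \mapsto [c][y]+[d]$ with $[c] \in \langle [a] \rangle \cup \{[1]\}$ and $[d] \in \Z_p$ arbitrary. But $[1] \in \langle [a] \rangle$ always (it is the identity of the cyclic group), so in fact
\begin{align}
H = \{ [y] \mapsto [c][y] + [d] \mid [c] \in \langle [a] \rangle, \ [d] \in \Z_p \} .
\end{align}

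From this description closure is immediate: the composition of $[y]\mapsto [c_1][y]+[d_1]$ and $[y]\mapsto [c_2][y]+[d_2]$ is $[y]\mapsto [c_1 c_2][y] + [c_1 d_2 + d_1]$, and $[c_1 c_2] \in \langle [a]\rangle$ since $\langle [a]\rangle$ is a subgroup of $\Z_p^\times$; the inverse of $[y]\mapsto [c][y]+[d]$ is $[y]\mapsto [c]^{-1}[y] - [c]^{-1}[d]$, again with linear part in $\langle [a]\rangle$. Hence $H$ is a subgroup of the affine group of $\Z_p$. Since $s_{[x]} \in H$ for every $[x]$ and $\mathrm{Inn}(X)$ is by definition the \emph{smallest} subgroup containing all $s_{[x]}$, we get $G \subseteq H$; conversely every $s_{[x]}^k \in G$ trivially, and $\varphi_{[m]} \in G$ by Lemma \ref{lem:inner-2}, so $H \subseteq G$. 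Therefore $G = H$, which is exactly \eqref{eq:inner-Alexander}.

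The one point requiring a little care — and the mild obstacle — is the bookkeeping in identifying $H$ with the affine set above, specifically checking that every affine map with linear part in $\langle[a]\rangle$ really does arise from $H$. For $[c] = [a^k] \neq [1]$, the element $[1-a^k]$ is a unit, so $[x] := [1-a^k]^{-1}[d]$ solves $[1-a^k][x] = [d]$ and $(s_{[x]})^k$ is the desired map; for $[c] = [1]$ the map is the translation $\varphi_{[d]}$. This also makes visible why the two pieces in \eqref{eq:inner-Alexander} genuinely overlap only in the identity and why one needs Lemma \ref{lem:inner-2} to capture the translations, which are not of the form $(s_{[x]})^k$ for any single $[x]$ when $a \neq 1$. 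No separate verification that $H$ is closed is strictly needed beyond the affine-group computation, and the argument nowhere uses primality of $p$ except through the results (Theorem \ref{thm:classification-prime}, Proposition \ref{prop:Nelson}) already invoked to set up the situation — the group-theoretic core is just that $\langle[a]\rangle$ is closed under multiplication and inversion in $\Z_p^\times$.
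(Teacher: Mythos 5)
Your proof is correct, and its skeleton is the same as the paper's: show the right-hand side of \eqref{eq:inner-Alexander} is a subgroup containing all generators $s_{[x]}$, and use Lemma \ref{lem:inner-2} to get the translations inside $G$, so the two inclusions follow. The difference is in how closure is verified. The paper checks it by hand, computing the three kinds of products $(s_{[x_1]})^{k_1}(s_{[x_2]})^{k_2}$, $(s_{[x]})^k\varphi_{[m]}$, $\varphi_{[m]}(s_{[x]})^k$ via Lemma \ref{lem:inner-1} and splitting into the cases $[a^{k_1+k_2}]=[1]$ and $[a^{k_1+k_2}]\neq[1]$; you instead identify the right-hand side once and for all with the set of affine maps $[y]\mapsto[c][y]+[d]$ with $[c]\in\langle[a]\rangle$ and $[d]\in\Z_p$, after which closure and inverses are immediate from the affine group law. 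That identification is exactly the content of the paper's two cases (your $[c]=[1]$ case is its Case 1, your $[c]\neq[1]$ case is its Case 2), so the mathematical substance coincides, but your packaging is cleaner and makes the overlap of the two pieces in \eqref{eq:inner-Alexander} transparent. One small correction to your closing remark: the step where you solve $[1-a^k][x]=[d]$ does use that $[1-a^k]$ is a unit when $[a^k]\neq[1]$, which is precisely where the primality of $p$ (i.e., $\Z_p$ being a field) enters; the paper uses the same fact in Lemma \ref{lem:inner-2} and in its Case 2, so this is not a gap, but it is not accurate to say primality is only used through the quoted classification results.
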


\begin{proof}
We denote by $(\mathrm{R})$ the 
right-hand side of (\ref{eq:inner-Alexander}) for simplicity. 
One knows $G \supset (\mathrm{R})$ from Lemma \ref{lem:inner-2}. 
We prove $G \subset (\mathrm{R})$. 
Note that $(\mathrm{R})$ contains generators of $G$, that is, 
\begin{align}
\{ s_{[x]} \mid [x] \in X \} \subset (\mathrm{R}) . 
\end{align}
Therefore, it is enough to show that $(\mathrm{R})$ is a group. 
To show this, one has only to check 
\begin{align}
(s_{[x_1]})^{k_1} (s_{[x_2]})^{k_2} , \ 
(s_{[x]})^{k} (\varphi_{[m]}) , \ 
(\varphi_{[m]}) (s_{[x]})^{k} \in (\mathrm{R}) . 
\end{align}

Claim 1: 
$(s_{[x_1]})^{k_1} (s_{[x_2]})^{k_2} \in (\mathrm{R})$. 
For simplicity, let 
\begin{align}
[z] := [a^{k_1}] [1-a^{k_2}] [x_2] + [1-a^{k_1}] [x_1] . 
\end{align}
Then, Lemma \ref{lem:inner-1} yields that 
\begin{align}
(s_{[x_1]})^{k_1} (s_{[x_2]})^{k_2} ([y]) = [a^{k_1 + k_2}] [y] + [z] . 
\end{align}
Case 1: Assume that $[a^{k_1 + k_2}] = [1]$. 
In this case, we have 
\begin{align}
(s_{[x_1]})^{k_1} (s_{[x_2]})^{k_2} ([y]) 
= [y] + [z] 
= \varphi_{[z]} ([y]) . 
\end{align}
This yields that 
\begin{align}
\begin{split}
(s_{[x_1]})^{k_1} (s_{[x_2]})^{k_2} 
= \varphi_{[z]} \in (\mathrm{R}) . 
\end{split}
\end{align}
Case 2: Assume that $[a^{k_1 + k_2}] \neq [1]$. 
In this case, one has 
\begin{align}
\begin{split}
(s_{[x_1]})^{k_1} (s_{[x_2]})^{k_2} ([y]) 
& = [a^{k_1 + k_2}] [y] + [1 - a^{k_1 + k_2}] [1 - a^{k_1 + k_2}]^{-1} [z] \\ 
& = (s_{[1 - a^{k_1 + k_2}]^{-1} [z]})^{k_1 + k_2} ([y]) . 
\end{split}
\end{align}
This yields that 
\begin{align}
\begin{split}
(s_{[x_1]})^{k_1} (s_{[x_2]})^{k_2} 
= (s_{[1 - a^{k_1 + k_2}]^{-1} [z]})^{k_1 + k_2} \in (\mathrm{R}) . 
\end{split}
\end{align}
From the arguments in Case 1 and Case 2, we conclude Claim 1. 

Claim 2: 
$(s_{[x]})^{k} (\varphi_{[m]}) \in (\mathrm{R})$. 
First of all, one has 
\begin{align}
\begin{split}
(s_{[x]}) (\varphi_{[m]}) ([y]) 
& = s_{[x]} ([y+m]) \\ 
& = [a] [y+m] + [1-a] [x] \\
& = [a] [y] + [1-a] ( [1-a]^{-1} [am] + [x] ) \\
& = s_{[1-a]^{-1} [am] + [x]} ([y]) . 
\end{split}
\end{align}
Hence, Claim 1 yields that 
\begin{align}
\begin{split}
(s_{[x]})^{k} (\varphi_{[m]})
= (s_{[x]})^{k-1} (s_{[1-a]^{-1} [am] + [x]}) \in (\mathrm{R}) . 
\end{split}
\end{align}
This concludes the proof of Claim 2. 

Claim 3: 
$(\varphi_{[m]}) (s_{[x]})^{k} \in (\mathrm{R})$. 
This can be proved similarly to Claim 2. 
\end{proof}

As a corollary, 
we have an explicit expression of the isotropy subgroup $G_{[0]}$ 
of $G$ at $[0] \in X$. 
This expression is crucial for the classification 
of two-point homogeneous quandles. 
Recall that $\langle s_{[0]} \rangle$ denotes the cyclic group 
generated by $s_{[0]}$. 

\begin{Cor}
\label{cor:isotropy}
The inner automorphism group of $X = \Lambda_p / (t-a)$ satisfies 
\begin{align}
G_{[0]} = \langle s_{[0]} \rangle . 
\end{align}
\end{Cor}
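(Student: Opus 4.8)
The plan is to read off the answer directly from the explicit description of $G$ in Theorem \ref{thm:inner-auto-Alexander}, using only that $\Z_p$ is a field. First I would note the easy inclusion: since $s_{[0]}([y]) = [a][y] + [1-a][0] = [a][y]$, we get $s_{[0]}([0]) = [0]$, so $s_{[0]} \in G_{[0]}$, and hence $\langle s_{[0]} \rangle \subset G_{[0]}$ because $G_{[0]}$ is a subgroup.

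For the reverse inclusion $G_{[0]} \subset \langle s_{[0]} \rangle$, take an arbitrary $g \in G_{[0]}$ and use Theorem \ref{thm:inner-auto-Alexander} to split into two cases according to which part of $(\mathrm{R})$ the element $g$ lies in. If $g = \varphi_{[m]}$, then $g([0]) = [m]$, so $g([0]) = [0]$ forces $[m] = [0]$, i.e.\ $g = \id = (s_{[0]})^0 \in \langle s_{[0]} \rangle$. If $g = (s_{[x]})^k$, then by formula \eqref{eq:1-inner-1} we have $g([0]) = [a^k][0] + [1-a^k][x] = [1-a^k][x]$, so the condition $g([0]) = [0]$ becomes $[1-a^k][x] = [0]$ in $\Z_p$. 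Since $\Z_p$ is a field (here is where primality of $p$ enters), this product vanishes only if $[x] = [0]$ or $[1-a^k] = [0]$.

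In the first sub-case $g = (s_{[0]})^k \in \langle s_{[0]} \rangle$ immediately. In the second sub-case $a^k \equiv 1 \pmod p$, and then \eqref{eq:1-inner-1} gives $g([y]) = [a^k][y] + [1-a^k][x] = [y]$ for all $[y]$, so $g = \id \in \langle s_{[0]} \rangle$. In every case $g \in \langle s_{[0]} \rangle$, which proves $G_{[0]} \subset \langle s_{[0]} \rangle$ and hence the desired equality. I do not anticipate a genuine obstacle here; the only point requiring a little care is the case division coming from Theorem \ref{thm:inner-auto-Alexander} (an element of $G$ may a priori be expressible in more than one of the two listed forms, but that causes no trouble since we only need membership, not uniqueness), together with the observation that the field structure of $\Z_p$ rules out zero divisors in the step $[1-a^k][x] = [0]$.
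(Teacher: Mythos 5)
Your proof is correct, and it is genuinely different from the one in the paper. You argue pointwise: every element of $G$ is, by Theorem \ref{thm:inner-auto-Alexander}, of the form $(s_{[x]})^k$ or $\varphi_{[m]}$, and you simply check which of these fix $[0]$, using formula \eqref{eq:1-inner-1} and the fact that $\Z_p$ has no zero divisors to force either $[x]=[0]$ or $[a^k]=[1]$ (in which case the element is the identity). The paper instead runs a counting argument: it decomposes $G$ as $\{\id\}$ together with the nontrivial elements of the groups $\langle s_{[x]}\rangle$ and the nontrivial translations $\varphi_{[m]}$, uses Lemma \ref{lem:conjugate} (conjugacy of the $s_{[x]}$, via connectedness) to see that all $\langle s_{[x]}\rangle$ have the same order, bounds $\# G \leq p\cdot\#\langle s_{[0]}\rangle$, and then applies orbit--stabilizer ($p=\#X=\#G/\#G_{[0]}$) to conclude $\#G_{[0]}\leq\#\langle s_{[0]}\rangle$. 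Your route is more direct and more elementary: it needs no conjugacy lemma and no orbit--stabilizer, only the explicit description of $G$ plus the field structure of $\Z_p$ (which the paper's proof uses only implicitly, through $p$ being prime in the divisibility count). Your parenthetical remark is also the right one: an element of $G$ may admit several expressions in the two listed forms, but since you only need membership in $\langle s_{[0]}\rangle$ for each possible expression, this ambiguity is harmless. One small point to keep explicit is that for negative $k$ the symbol $[a^k]$ means $[a]^k$ in $(\Z_p)^\times$ (legitimate since $[a]\neq[0]$), so that \eqref{eq:1-inner-1} indeed holds for all $k\in\Z$ and $(s_{[0]})^k$ lies in the finite cyclic group $\langle s_{[0]}\rangle$ in every case.
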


\begin{proof}
One knows ($\supset$). 
To prove ($\subset$), it is enough to show that 
\begin{align}
\label{eq:claim-isotropy}
\# G_{[0]} \leq \# \langle s_{[0]} \rangle . 
\end{align}
We show this inequality. 
From Theorem \ref{thm:inner-auto-Alexander}, one has 
\begin{align}
\label{eq:claim10-30}
G = \{ \id \} \cup 
\bigcup_{[x] \in X} (\langle s_{[x]} \rangle \setminus \{ \id \}) 
\cup \{ \varphi_{[m]} \mid [0] \neq [m] \in X \} . 
\end{align}
Since $X$ is connected, 
Lemma \ref{lem:conjugate} yields that $s_{[x]}$ and $s_{[0]}$ are conjugate 
for every $[x] \in X$. 
One thus has 
\begin{align}
\# \langle s_{[x]} \rangle = \# \langle s_{[0]} \rangle . 
\end{align}
This yields that 
\begin{align}
\# G 
\leq 1 + \sum_{[x] \in X} (\# \langle s_{[x]} \rangle -1) + (p-1) 
= \# \langle s_{[0]} \rangle \cdot p . 
\end{align}
Recall that $G$ acts transitively on $X$, and hence $X = G / G_{[0]}$. 
We thus have 
\begin{align}
p = \# X = \# G / \# G_{[0]} 
\leq (\# \langle s_{[0]} \rangle \cdot p) / \# G_{[0]} . 
\end{align}
This proves (\ref{eq:claim-isotropy}), 
and hence completes the proof of the corollary. 
\end{proof}

\subsection{A classification}

\label{subsection:classification3}

In this subsection, 
we classify two-point homogeneous quandles and quandles of cyclic type, 
with prime cardinality $p \geq 3$. 

For the classification, we have only to study linear Alexander quandles. 
We begin with an elementary observation. 

\begin{Lem}
\label{lem:before-main}
Let $X = \Lambda_p / (t-a) = \Z_p$ be a linear Alexander quandle. 
Then we have 
\begin{align}
\langle s_{[0]} \rangle.[1] = \{ [1] , [a] , [a^2] , [a^3] , \ldots \} . 
\end{align}
\end{Lem}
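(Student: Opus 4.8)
The plan is to unwind the definition of $s_{[0]}$ from \eqref{eq:Alexander-operation} and then iterate. First I would observe that putting $[x] = [0]$ into the Alexander operation gives $s_{[0]}([y]) = [a][y]$, so $s_{[0]}$ is simply multiplication by $[a]$ on $\Z_p$. Then by a trivial induction (or by invoking \eqref{eq:1-inner-1} from Lemma \ref{lem:inner-1} with $[x] = [0]$, which yields $(s_{[0]})^k([y]) = [a^k][y]$), one gets $(s_{[0]})^k([1]) = [a^k]$ for every $k \in \Z$.

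From this the claim is immediate: the orbit $\langle s_{[0]} \rangle . [1]$ is exactly $\{ (s_{[0]})^k([1]) \mid k \in \Z \} = \{ [a^k] \mid k \in \Z \}$, which is the set displayed in the statement. I would write this out as the short chain
\begin{align*}
\langle s_{[0]} \rangle . [1]
= \{ (s_{[0]})^k([1]) \mid k \in \Z \}
= \{ [a^k][1] \mid k \in \Z \}
= \{ [1], [a], [a^2], [a^3], \ldots \} ,
\end{align*}
noting that since $p$ is prime and $2 \le a \le p-1$, the element $[a]$ is a unit in $\Z_p$, so the list of powers $[a^k]$ for $k \ge 0$ already exhausts $\langle [a] \rangle$ and there is no need to worry about negative exponents separately.

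There is essentially no obstacle here; the only thing to be careful about is the bookkeeping convention that $\langle s_{[0]} \rangle$ denotes the cyclic group generated by $s_{[0]}$ (as fixed in the text before Corollary \ref{cor:isotropy}), so that the orbit is taken under all integer powers, and that the ellipsis in the displayed set is understood as ranging over these powers. I would state the proof in two or three sentences, citing \eqref{eq:Alexander-operation} for the base computation $s_{[0]}([y]) = [a][y]$ and \eqref{eq:1-inner-1} for the iterated formula, and conclude.
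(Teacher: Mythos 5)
Your proposal is correct and follows essentially the same route as the paper: the paper's proof simply computes $s_{[0]}([y]) = [a][y] + [1-a][0] = [ay]$ from \eqref{eq:Alexander-operation} and notes that the orbit description then follows easily, exactly as you do. Your additional remarks about iterating via \eqref{eq:1-inner-1} and about negative exponents being harmless since $[a]$ is a unit are just careful spellings-out of the same argument.
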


\begin{proof}
By definition, one has 
\begin{align}
s_{[0]} ([y]) = [a] [y] + [1-a] [0] = [ay] . 
\end{align}
Then the proof easily follows. 
\end{proof}

We denote the multiplicative group of $\Z_p$ by 
\begin{align}
(\Z_p)^{\times} := \Z_p \setminus \{ [0] \} . 
\end{align}
One knows that $(\Z_p)^{\times}$ is a cyclic group of order $p-1$. 
An integer $a$ ($2 \leq a \leq p-1$) is called a 
\textit{primitive root modulo $p$} 
if $[a]$ generates $(\Z_p)^{\times}$. 

\begin{Thm}
\label{thm:classification}
Let $X$ be a quandle with prime cardinality $p \geq 3$. 
Then, 
the following conditions are mutually equivalent$:$ 
\begin{enumerate}
\item
$X$ is two-point homogeneous, 
\item
$X$ is isomorphic to the linear Alexander quandle $\Lambda_p / (t-a)$, 
where $a$ is a primitive root modulo $p$, 
\item
$X$ is of cyclic type. 
\end{enumerate} 
\end{Thm}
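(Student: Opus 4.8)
The plan is to establish the cycle of implications $(3) \Rightarrow (1) \Rightarrow (2) \Rightarrow (3)$, drawing on the characterizations of two-point homogeneity and cyclic type together with the computation of the inner automorphism group from the preceding subsections. The implication $(3) \Rightarrow (1)$ requires nothing new: it is exactly Proposition \ref{prop:c-TPH}.

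For $(1) \Rightarrow (2)$, I would first observe that a two-point homogeneous quandle is connected by Proposition \ref{prop:isotropic}, so by the classification of connected quandles of prime cardinality (Theorem \ref{thm:classification-prime}) the quandle $X$ is isomorphic to some linear Alexander quandle $\Lambda_p/(t-a)$ with $a \in \{2,\ldots,p-1\}$; I identify $X = \Z_p$ as before. It then remains to show that $a$ must be a primitive root modulo $p$. Applying Proposition \ref{prop:isotropic} once more, two-point homogeneity forces the isotropy group $G_{[0]}$ to act transitively on $X \setminus \{[0]\} = (\Z_p)^{\times}$. By Corollary \ref{cor:isotropy} we have $G_{[0]} = \langle s_{[0]} \rangle$, and by Lemma \ref{lem:before-main} the orbit of $[1]$ under $\langle s_{[0]} \rangle$ is $\{[1],[a],[a^2],\ldots\}$, i.e.\ the cyclic subgroup $\langle [a] \rangle$ of $(\Z_p)^{\times}$. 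Transitivity therefore says $\langle [a] \rangle = (\Z_p)^{\times}$, which is precisely the statement that $a$ is a primitive root modulo $p$.

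For $(2) \Rightarrow (3)$, I would take $X = \Lambda_p/(t-a) = \Z_p$ with $a$ a primitive root. Then $X$ is connected (by Proposition \ref{prop:Nelson}(2), or directly by Lemma \ref{lem:inner-2}), and the restriction of $s_{[0]}$ to $X \setminus \{[0]\} = (\Z_p)^{\times}$ is multiplication by $[a]$ (as computed in Lemma \ref{lem:before-main}); since $[a]$ generates the cyclic group $(\Z_p)^{\times}$ of order $p-1$, this permutation is a single $(p-1)$-cycle. Hence condition (2) of Proposition \ref{prop:cyclic-isotropic} holds with $x = [0]$, so $X$ is of cyclic type, closing the cycle.

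I do not expect a serious obstacle: the substantive work has already been done in the earlier results, above all Corollary \ref{cor:isotropy}, which pins down $G_{[0]} = \langle s_{[0]} \rangle$. The one point deserving care is the elementary translation between ``$\langle s_{[0]} \rangle$ acts transitively on $(\Z_p)^{\times}$'' and ``$[a]$ generates $(\Z_p)^{\times}$'' (equivalently, ``multiplication by $[a]$ is a single $(p-1)$-cycle''), which is exactly where one uses that $p$ is prime, so that $(\Z_p)^{\times}$ is cyclic of order $p-1$.
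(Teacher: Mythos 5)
Your proposal is correct and follows essentially the same route as the paper: $(3)\Rightarrow(1)$ via Proposition \ref{prop:c-TPH}, $(1)\Rightarrow(2)$ via Proposition \ref{prop:isotropic}, Theorem \ref{thm:classification-prime}, Corollary \ref{cor:isotropy} and Lemma \ref{lem:before-main}, and $(2)\Rightarrow(3)$ via Proposition \ref{prop:cyclic-isotropic}. The only cosmetic difference is that in $(2)\Rightarrow(3)$ you verify condition (2) of Proposition \ref{prop:cyclic-isotropic} (a single $(p-1)$-cycle) while the paper verifies the equivalent condition (3) (transitivity of $\langle s_{[0]} \rangle$), which amounts to the same computation.
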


\begin{proof}
We prove (1) $\Rightarrow$ (2). 
Assume that $X$ is two-point homogeneous. 
Proposition \ref{prop:isotropic} yields that 
$X$ is connected and $G_{[0]}$ acts transitively on $X \setminus \{ [0] \}$. 
Since $X$ is connected, 
it is isomorphic to $\Lambda_p / (t-a)$ for some $a = 2, 3, \ldots, p-1$ 
(see Theorem \ref{thm:classification-prime}). 
Since $G_{[0]}$ acts transitively on $X \setminus \{ [0] \}$, one has 
\begin{align}
G_{[0]}.[1] 
= X \setminus \{ [0] \} 
= (\Z_p)^{\times} . 
\end{align}
Furthermore, 
from Corollary \ref{cor:isotropy} and Lemma \ref{lem:before-main}, 
we have 
\begin{align}
G_{[0]}.[1] 
= \langle s_{[0]} \rangle.[1] 
= \{ [1] , [a] , [a^2] , [a^3] , \ldots \} . 
\end{align}
Therefore, 
$[a]$ generates $(\Z_p)^{\times}$, 
and hence $a$ is a primitive root modulo $p$. 

We prove (2) $\Rightarrow$ (3). 
Let $X := \Lambda_p / (t-a)$ 
and assume that $a$ is a primitive root modulo $p$. 
Since $a \neq 1$, one knows that $X$ is connected 
(see Proposition \ref{prop:Nelson}). 
Thus, from Proposition \ref{prop:cyclic-isotropic}, 
we have only to show that 
$\langle s_{[0]} \rangle$ acts transitively on $X \setminus \{ [0] \}$. 
This follows from Lemma \ref{lem:before-main} and the assumption on $a$, 
\begin{align}
\langle s_{[0]} \rangle.[1] 
= \{ [1] , [a] , [a^2] , [a^3] , \ldots \} 
= (\Z_p)^{\times} = X \setminus \{ [0] \} . 
\end{align}
Hence $X$ is of cyclic type. 

One knows (3) $\Rightarrow$ (1) from Proposition \ref{prop:c-TPH}. 
\end{proof}

Recall that, for every prime number $p \geq 3$, 
there always exists a primitive root $a$ modulo $p$. 
This implies the following. 

\begin{Cor}
For each prime number $p \geq 3$, 
there exists a two-point homogeneous quandle with cardinality $p$. 
\end{Cor}

Furthermore, in general, 
there are several non-isomorphic two-point homogeneous quandles 
with the same cardinality $p$. 
We list the number of two-point homogeneous quandles 
with prime cardinality $p \leq 37$ 
in Table \ref{table}. 
The first column denote the prime number $p$. 
The second column $\#$ represents the number of 
the isomorphism classes of two-point homogeneous quandles 
with cardinality $p$. 
The third column lists $a$ so that 
$\Lambda_p / (t-a)$ is two-point homogeneous, 
that is, $a$ is a primitive root modulo $p$ 
(we refer to \cite{Takagi}). 
Here, $a \leftrightarrow b$ means that 
$\Lambda_p / (t-a)$ and $\Lambda_p / (t-b)$ are dual to each other, 
that is, $ab \equiv 1$ $(\mathrm{mod} \, p)$ 
(see Proposition \ref{prop:Nelson}).

\begin{table}[ht]
\begin{tabular}{c|c|l}
$p$ & $\#$ & $a$ (so that $\Lambda_p / (t-a)$ is two-point homogeneous) \\ \hline 
$3$ & $1$ & $2$ \\ 
$5$ & $2$ & $2 \leftrightarrow 3$ \\ 
$7$ & $2$ & $3 \leftrightarrow 5$ \\ 
$11$ & $4$ & $2 \leftrightarrow 6$, \ $7 \leftrightarrow 8$ \\ 
$13$ & $4$ & $2 \leftrightarrow 7$, \ $6 \leftrightarrow 11$ \\ 
$17$ & $8$ & $3 \leftrightarrow 6$, \ $5 \leftrightarrow 7$, \  
$10 \leftrightarrow 12$, \ $11 \leftrightarrow 14$ \\ 
$19$ & $6$ & $2 \leftrightarrow 10$, \ $3 \leftrightarrow 13$, \ $14 \leftrightarrow 15$ \\
$23$ & $10$ & 
$5 \leftrightarrow 14$, \
$7 \leftrightarrow 10$, \
$11 \leftrightarrow 21$, \
$15 \leftrightarrow 20$, \
$17 \leftrightarrow 19$ \\ 
$29$ & $12$ & 
$2 \leftrightarrow 15$, \
$3 \leftrightarrow 10$, \
$8 \leftrightarrow 11$, \
$14 \leftrightarrow 27$, \
$18 \leftrightarrow 21$, \
$19 \leftrightarrow 26$ \\
$31$ & $8$ & 
$3 \leftrightarrow 21$, \
$11 \leftrightarrow 17$, \
$12 \leftrightarrow 13$, \
$22 \leftrightarrow 24$ \\
$37$ & $12$ & 
$2 \leftrightarrow 19$, \
$5 \leftrightarrow 15$, \
$13 \leftrightarrow 20$, \
$17 \leftrightarrow 24$, \
$18 \leftrightarrow 35$, \
$22 \leftrightarrow 32$. 
\end{tabular}
\medskip 
\caption{The number of two-point homogeneous quandles}
\label{table}
\end{table}

\section{Appendix}

\label{section:appendix}

We saw in Proposition \ref{prop:c-TPH} that, 
if $X$ is of cyclic type, then it is two-point homogeneous. 
We are interested in whether the converse of this statement holds or not. 
Our naive conjecture is the following. 

\begin{Conj}
\label{conjecture}
Let $X$ be a two-point homogeneous quandle with finite cardinality. 
Then $X$ is of cyclic type. 
\end{Conj}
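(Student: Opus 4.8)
The plan is to reformulate the conjecture in terms of the inner automorphism group and then reduce it, via the structure theory of finite $2$-transitive permutation groups, to a case check. We may assume $n := \#X \geq 3$. By Proposition~\ref{prop:isotropic}, $X$ is two-point homogeneous if and only if $G := \mathrm{Inn}(X)$ acts $2$-transitively on $X$. Fix $x_0 \in X$ and put $H := G_{x_0}$ and $z := s_{x_0}$. From the automorphism identity $f \circ s_x = s_{f(x)} \circ f$ (used in Lemma~\ref{lem:conjugate}) one gets $h \circ z = h \circ s_{x_0} = s_{h(x_0)} \circ h = z \circ h$ for every $h \in H$, so $z \in Z(H)$; moreover $X$ is recovered from the triple $(G,H,z)$ by $s_{gH}(g'H) = g z g^{-1} g' H$, and $z \neq 1$ since otherwise Lemma~\ref{lem:conjugate} forces every $s_x$ to be trivial, contradicting two-point homogeneity. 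Finally, by Proposition~\ref{prop:cyclic-isotropic}, $X$ is of cyclic type exactly when $\langle z \rangle$ is already transitive on $X \setminus \{x_0\}$. So the conjecture asserts: whenever such a triple $(G,H,z)$ arises, the cyclic group $\langle z \rangle$ is transitive on the $n-1$ points moved by $H$.

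Next I would invoke the classical fact (Burnside) that a finite $2$-transitive group is either of \emph{affine type} (it has a regular normal elementary abelian subgroup) or of \emph{almost simple type} (its socle is nonabelian simple). The affine case is elementary and gives the conclusion at once: if $V \cong \mathbb{F}_p^d$ is regular normal in $G$, then identifying $X$ with $V$ one finds $s_{x_0}(w) = Tw$ and $s_v(w) = Tw + (1-T)v$, where $T \in \mathrm{GL}(V)$ is the action of $z$ on $V$; since $G$ is transitive, $1-T$ must be invertible (otherwise the translations produced by the $s_v$ lie in the proper $T$-invariant subspace $\mathrm{im}(1-T)$, and, as $Tw-w \in \mathrm{im}(1-T)$ for all $w$, the group $\langle s_v : v \in V\rangle$ would stabilise each of its cosets), whence $\mathrm{Inn}(X) = V \rtimes \langle T \rangle$ exactly as in Theorem~\ref{thm:inner-auto-Alexander} and Corollary~\ref{cor:isotropy} for $d=1$. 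Equating this with $G$ forces $H = \langle T \rangle = \langle s_{x_0} \rangle$, which is transitive on $X \setminus \{x_0\}$ by $2$-transitivity; so $X$ is of cyclic type.

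It remains to treat the almost simple case, and here the plan is to show it cannot occur at all: the point stabiliser $H$ of an almost simple $2$-transitive group has trivial centre, contradicting $1 \neq z \in Z(H)$. One should also keep in reserve the refinements $C_G(z) = H$ (because $C_G(z) \supseteq H$, $H$ is maximal by primitivity, and $C_G(z) = G$ would give $z \in Z(G) = 1$) and $\langle z^G \rangle = G$, in case some borderline example needs extra pressure. Concretely I would run through the classification of almost simple $2$-transitive groups --- $A_n$ and $S_n$ in the natural action; $\mathrm{PSL}_d(q) \leq G \leq \mathrm{P\Gamma L}_d(q)$ on projective space; $\mathrm{Sp}_{2m}(2)$ on its two orbits of quadratic forms; $\mathrm{PSU}_3(q)$, $\mathrm{Sz}(q)$, ${}^2G_2(q)$ on their natural geometries; and the finitely many sporadic cases ($M_{11}$ on $11$ and $12$ points, $M_{12}$, $M_{22}$, $M_{23}$, $M_{24}$, $\mathrm{PSL}_2(11)$ on $11$ points, $A_7$ on $15$ points, $\mathrm{HS}$, $\mathrm{Co}_3$, and the like) --- verifying in each that $Z(H) = 1$: the stabilisers are alternating or symmetric groups, parabolic subgroups (whose centres collapse modulo scalars, hence vanish projectively), Frobenius-type groups, or explicitly tabulated maximal subgroups, and the only stabilisers carrying a nontrivial centre occur in degrees so small ($S_3$ on $3$ points, $A_4$ on $4$ points) that the group is in fact of affine type and already handled.

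The main obstacle is precisely this last step: it rests on the classification of finite simple groups, through the classification of finite $2$-transitive groups, and on having uniform control of point-stabiliser structure across all families in order to compute its centre. Everything else is elementary --- indeed the argument above already proves the conjecture whenever $\mathrm{Inn}(X)$ is of affine type, in particular whenever it is solvable. On the other hand, a single almost simple $2$-transitive $G$ carrying an element $1 \neq z \in Z(H)$ with $\langle z^G \rangle = G$ would produce a two-point homogeneous quandle that is \emph{not} of cyclic type, so the almost simple case is also exactly where a counterexample would have to hide --- which is presumably why the statement is offered here as a conjecture rather than a theorem.
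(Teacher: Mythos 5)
The statement you are addressing is presented in the paper as a conjecture: the paper contains no proof of it, only two special cases --- Theorem \ref{thm:classification} settles it when $\#X$ is prime (via the Etingof--Guralnick--Soloviev classification of connected quandles of prime order), and Proposition \ref{prop:THQ-C} settles it when $\#X-1$ is prime (an elementary orbit count based on Lemma \ref{lem:converse-1}). So there is no proof of record to compare against, and your text must stand on its own; as written it is a programme rather than a proof. The sound part: translating two-point homogeneity into $2$-transitivity of $G=\mathrm{Inn}(X)$, the observation that $1\neq s_{x_0}\in Z(G_{x_0})$ (this is exactly the paper's Lemma \ref{lem:converse-1}), the appeal to Burnside's affine/almost-simple dichotomy, and the affine case are all correct and essentially complete: the invertibility of $1-T$ via the coset argument and transitivity, the identification $G=V\rtimes\langle T\rangle$, hence $G_{x_0}=\langle s_{x_0}\rangle$, which together with $2$-transitivity and Proposition \ref{prop:cyclic-isotropic} gives cyclic type. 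This already goes beyond the paper's Proposition \ref{prop:THQ-C} in a different direction, though it does not by itself recover Theorem \ref{thm:classification}, since a $2$-transitive group of prime degree need not be affine.

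The genuine gap is the almost-simple case, which you explicitly defer to an unexecuted case-by-case verification that $Z(H)=1$ for the point stabilizer $H$ of every almost simple $2$-transitive group. That verification is the entire remaining mathematical content, and it rests on the classification of finite simple groups: several families require real structural arguments (the parabolic stabilizers for $\mathrm{PSL}_d(q)\leq G\leq \mathrm{P\Gamma L}_d(q)$, where one must check that the centre dies modulo scalars and is not resurrected by field automorphisms; the stabilizers $Q\rtimes T$ in $\mathrm{PSU}_3(q)$, $\mathrm{Sz}(q)$, ${}^2G_2(q)$, where one must check that $T$ has no nonzero fixed points on $Z(Q)$; boundary cases such as $\mathrm{P\Gamma L}_2(8)$ on $28$ points), and until each is done your argument proves nothing beyond the affine case. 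For what it is worth, the strategy is viable: the conjecture was in fact later proved along exactly these lines (Vendramin, \textit{J.\ Math.\ Soc.\ Japan} \textbf{69} (2017)), where the triviality of $Z(H)$ in the almost simple cases is established case by case. So your outline is the right one, but in its present form the decisive step is asserted rather than proved, and the proposal does not yet establish the conjecture.
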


Theorem \ref{thm:classification} yields that, 
our conjecture is true if $\# X$ is prime. 
We show in this Appendix that, 
our conjecture is also true if $\# X - 1$ is prime. 

\begin{Lem}
\label{lem:converse-1}
Let $(X,s)$ be a quandle and $x \in X$. 
Then, $s_x$ is contained in the center of $G_x$. 
\end{Lem}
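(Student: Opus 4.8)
The plan is to show that $s_x$ commutes with every element of $G_x$, i.e., with every inner automorphism of $X$ that fixes $x$. The natural tool is the axiom (S3), which says $s_a \circ s_b = s_{s_a(b)} \circ s_a$ for all $a,b \in X$; equivalently, for any automorphism $f$ of the quandle, $f \circ s_b \circ f^{-1} = s_{f(b)}$. This ``naturality'' of the assignment $b \mapsto s_b$ is exactly what I want to exploit.

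First I would take an arbitrary $f \in G_x$, so $f \in G = \mathrm{Inn}(X,s)$ and $f(x) = x$. Since $f$ is in particular an automorphism of $(X,s)$, the identity displayed just before Lemma~\ref{lem:conjugate} (or directly the computation in its proof) gives $f \circ s_x = s_{f(x)} \circ f$. Now apply $f(x) = x$ to get $f \circ s_x = s_x \circ f$, i.e., $f$ and $s_x$ commute. Since $f \in G_x$ was arbitrary, $s_x$ commutes with all of $G_x$; and as $s_x \in G_x$ itself (it fixes $x$ by (S1)), this says precisely that $s_x$ lies in the center of $G_x$.

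There is essentially no obstacle here: the statement is a direct consequence of the automorphism property $f \circ s_b = s_{f(b)} \circ f$ specialized to $b = x$ and $f$ fixing $x$. The only minor points to be careful about are (i) noting that elements of $G_x$, being elements of $G = \mathrm{Inn}(X,s) \subset \mathrm{Aut}(X,s)$, really are automorphisms so that the conjugation formula applies, and (ii) recalling $s_x \in G_x$ so that ``commutes with every element of $G_x$'' upgrades to ``is central in $G_x$.'' Both are immediate from the definitions given earlier.

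Concretely, for $f \in G_x$ one writes
\begin{align}
f \circ s_x = s_{f(x)} \circ f = s_x \circ f,
\end{align}
which is the whole argument.
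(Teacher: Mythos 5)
Your argument is correct and is exactly the paper's proof: take $f \in G_x$ and use the automorphism identity $f \circ s_x = s_{f(x)} \circ f = s_x \circ f$ together with $f(x)=x$. The extra remarks (that inner automorphisms are automorphisms and that $s_x \in G_x$) are fine and do not change the substance.
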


\begin{proof}
Take any $f \in G_x$. 
Then we have 
\begin{align*}
f \circ s_x = s_{f(x)} \circ f = s_x \circ f , 
\end{align*}
since $f$ is an automorphism and $f(x) = x$. 
\end{proof}

\begin{Prop}
\label{prop:THQ-C}
Let $X$ be a two-point homogeneous quandle, 
and assume that $\# X = p+1$, where $p$ is a prime number. 
Then $X$ is of cyclic type. 
\end{Prop}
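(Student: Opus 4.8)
The plan is to exploit the characterization of two-point homogeneity (Proposition~\ref{prop:isotropic}) together with the constraint that $\#X-1=p$ is prime. Fix $x\in X$. Since $X$ is two-point homogeneous, $G_x$ acts transitively on $X\setminus\{x\}$, a set of cardinality $p$. First I would consider the permutation group induced by $G_x$ on $X\setminus\{x\}$; transitivity forces its order to be divisible by $p$, so by Cauchy's theorem it contains an element $\sigma$ of order $p$. Since $p$ is prime and $\sigma$ permutes $p$ points without fixed points (a nontrivial power would otherwise fix a point and have order $p$ on the remaining points, impossible), $\sigma$ must be a single $p$-cycle. The goal is to upgrade this to the statement that $s_x$ itself acts as a $p$-cycle on $X\setminus\{x\}$.

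The key structural input is Lemma~\ref{lem:converse-1}: $s_x$ lies in the center of $G_x$. So the cyclic group $\langle\sigma\rangle$ of order $p$ and the element $s_x$ commute inside the symmetric group on $X\setminus\{x\}$. Now I would use the standard fact that the centralizer in $S_p$ of a $p$-cycle $\sigma$ is exactly $\langle\sigma\rangle$ itself (of order $p$). Therefore the image $\bar s_x$ of $s_x$ in this symmetric group lies in $\langle\sigma\rangle$, hence is either trivial or a $p$-cycle. Ruling out the trivial case is the crux: if $s_x$ acted as the identity on $X\setminus\{x\}$, then since $s_x(x)=x$ as well, $s_x=\id_X$; but then by Lemma~\ref{lem:conjugate} (applicable since $X$ is connected, being two-point homogeneous) every $s_y$ is conjugate to $s_x=\id_X$, so $X$ is the trivial quandle, contradicting two-point homogeneity when $\#X=p+1\geq 3$. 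Hence $s_x$ acts as a $p$-cycle, i.e.\ a cyclic permutation of order $(n-1)$, on $X\setminus\{x\}$.

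Finally, since this holds for the fixed $x$ and $X$ is connected, Proposition~\ref{prop:cyclic-isotropic} (equivalence of (1) and (2)) immediately gives that $X$ is of cyclic type.

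The main obstacle I anticipate is the passage from ``$G_x$ contains a $p$-cycle'' to ``$s_x$ is a $p$-cycle''; everything hinges on the centralizer computation in $S_p$ combined with the centrality of $s_x$ from Lemma~\ref{lem:converse-1}, and on carefully excluding the degenerate possibility that $s_x$ is trivial on $X\setminus\{x\}$. The rest is bookkeeping with results already in hand.
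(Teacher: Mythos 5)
Your argument is correct, but it takes a genuinely different route from the paper's. The paper never invokes Cauchy's theorem or the structure of centralizers in the symmetric group: instead it decomposes $X \setminus \{x\}$ into $\langle s_x \rangle$-orbits, uses the centrality of $s_x$ in $G_x$ (Lemma \ref{lem:converse-1}) together with the transitivity of $G_x$ on $X \setminus \{x\}$ to show that all these orbits have the same cardinality, rules out orbits of size one exactly as you do (via Lemma \ref{lem:conjugate} and connectedness), and then concludes from the primality of $p = m \cdot \#(\langle s_x\rangle.y_1)$ that there is a single orbit, which is condition (3) of Proposition \ref{prop:cyclic-isotropic}. Your version instead extracts an element $\sigma$ of order $p$ in the permutation group induced by $G_x$ on $X\setminus\{x\}$ (orbit--stabilizer plus Cauchy), observes that $\sigma$ must be a $p$-cycle, and uses the standard fact that the centralizer of a $p$-cycle in $S_p$ is $\langle\sigma\rangle$ to force the restriction of $s_x$ to be trivial or a $p$-cycle; the centrality lemma enters at that point. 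Both proofs hinge on the same two structural inputs (Lemma \ref{lem:converse-1} and Lemma \ref{lem:conjugate}), but the paper's counting argument is more elementary, needing no external group theory, while yours pinpoints the stronger conclusion that $s_x$ itself is a full cycle directly rather than via transitivity of $\langle s_x\rangle$. One small blemish: your parenthetical justification that $\sigma$ is a single $p$-cycle is garbled as written; the clean reason is that an element of order $p$ in $\mathrm{Sym}(X\setminus\{x\}) \cong S_p$ has cycle lengths $1$ or $p$ only, and a $p$-cycle already exhausts all $p$ points, so a nontrivial such element admits no fixed points.
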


\begin{proof}
Let $X$ be a two-point homogeneous quandle with $\# X = p+1$. 
Note that $X$ is connected from Proposition \ref{prop:isotropic}. 
Take any $x \in X$. 
From Proposition \ref{prop:cyclic-isotropic}, 
we have only to show that $\langle s_x \rangle$ acts transitively 
on $X \setminus \{ x \}$. 

Let us consider the orbit decomposition 
\begin{align*}
X \setminus \{ x \} = \coprod_{i=1}^m \langle s_x \rangle . y_i , 
\end{align*}
where $\langle s_x \rangle . y_i$ denotes the $\langle s_x \rangle$-orbit 
through $y_i$, and $m$ is the number of orbits. 
We are going to show that $m=1$. 

Claim 1: 
All $\langle s_x \rangle$-orbits have the same cardinality. 
To show this, take any $y , z \in X \setminus \{ x \}$. 
Since $X$ is two-point homogeneous, 
Proposition \ref{prop:isotropic} yields that 
there exists $f \in G_x$ such that $f(y) = z$. 
From Lemma \ref{lem:converse-1}, we have 
\begin{align*}
f( \langle s_x \rangle . y) 
= \langle s_x \rangle . f(y) 
= \langle s_x \rangle . z . 
\end{align*}
This implies that 
$\langle s_x \rangle . y$ and $\langle s_x \rangle . z$ have the same cardinality. 

Claim 2: $\# ( \langle s_x \rangle . y_1) > 1$. 
Assume that $\# ( \langle s_x \rangle . y_1) = 1$. 
Then, Claim 1 yields that 
every $\langle s_x \rangle$-orbit has the cardinality $1$. 
This means $s_x = \id_X$. 
Since $X$ is connected, 
Lemma \ref{lem:conjugate} yields that $s_z = \id_X$ for all $z \in X$. 
This implies that $X$ is a trivial quandle, 
which contradicts the connectedness of $X$. 

Claim 3: $m=1$. 
From Claim 1, we have 
\begin{align*}
p = \# (X \setminus \{ x \}) = m \cdot \# ( \langle s_x \rangle . y_1) . 
\end{align*}
Since $p$ is a prime number by assumption, Claim 2 yields that $m=1$. 
\end{proof}

\bibliographystyle{amsplain}

\end{document}